\newcommand{\subjclass}[2][1991]{%
  \let\@oldtitle\@title%
  \gdef\@title{\@oldtitle\footnotetext{#1 \emph{Mathematics subject classification.} #2}}%
}
\newcommand{\keywords}[1]{%
  \let\@@oldtitle\@title%
  \gdef\@title{\@@oldtitle\footnotetext{\emph{Key words and phrases.} #1.}}%
}
\newtheorem{theorem}{Theorem}[section]
\newtheorem{proposition}[theorem]{Proposition}
\newtheorem{Lemma}[theorem]{Lemma}
\theoremstyle{definition}
\newtheorem{Definition}[theorem]{Definition}
\theoremstyle{remark}
\newtheorem{remark}[theorem]{Remark}
\newcommand{\cO}{{\mathcal O}}
\newcommand{\cH}{{\mathcal H}}
\newcommand{\cL}{{\mathcal L}}
\newcommand{\Z}{\mathbb Z}
\newcommand{\C}{\mathbb C}
\newcommand{\proj}{\mathbb P}
\newcommand{\pu}{\mathbb P ^1}
\newcommand{\f}{\varphi}
\newcommand{\ra}{\rightarrow}
\DeclareMathOperator{\Sing}{Sing}
\DeclareMathOperator{\Hilb}{Hilb}
\DeclareMathOperator{\Pic}{Pic}
\begin{document}

\title{Fano's Last Fano}

\author[$\star$]{Marco Andreatta}
\author[$\dagger$]{Roberto Pignatelli}

\affil[$\star$]{Dipartimento di Matematica, Università di Trento, Italia; marco.andreatta@unitn.it}
\affil[$\dagger$]{Dipartimento di Matematica, Università di Trento, Italia; roberto.pignatelli@unitn.it}

\subjclass[2000]{14J45, 14M15, 14C05}
\keywords{Fano manifolds, Grassmannians, Hilbert Schemes

\medskip \noindent
{{\bf Acknowledgments}.
Besides Gino Fano we like to thank: Elena Scalambro, who brought to our attention the paper during the INDAM workshop {\it Algebraic Geometry Between Tradition and Future - An Italian Perspective} held in Roma in December 2021;  Ivan Cheltsov, for an enlightening conversation on Mukai's Example 2 \cite{Mu1};  Barbara Bolognesi for pointing us the useful reference \cite{BC};  Sandro Verra for wise suggestions on the Picard number of some Fano manifolds.}}

\maketitle

\begin{abstract}{ In 1949 Fano published his last paper on $3$-folds with canonical sectional curves \cite{Fa5}. There he constructed and described a $3$-fold of degree $22$ in a projective space of dimension $13$ with canonical curve section, which we like to call {\it Fano's last Fano}. We report on Fano's construction and we provide various (in our opinion missing) proofs in modern language. Moreover, we try to use results and techniques available at that time.
After that we construct Fano's last Fano with modern tools, in particular via the Hilbert scheme of zero cycles on a rational surface; as a consequence we easily point out the corresponding example in the Mori-Mukai classification \cite{MM}.}
\end{abstract}

\section{Introduction}

In the early 1900, Gino Fano started a systematic study of projective varieties of dimension $3$. His pioneering work was remarkably original and deep, although at that time the necessary mathematical tools, especially in the field of Algebra, were not well developed. It is generally accepted that his proofs are not enough rigorous for the modern standard; on the other hand, they contain many intuitions on the geometry of projective varieties, which turned out to be correct and fundamental. 

\smallskip
We consider smooth projective varieties $X$ defined over $\C$; if $n$ is the dimension of $X$, we sometime call $X$ an  {\sl $n$-fold}. We denote by $K_X$ the  {\sl canonical sheaf} of $X$.

Fano studied projective $3$-folds $X \subset \proj^N$ such that for general hyperplanes $H_1, H_2$ the curve $\Gamma:= X \cap H_1 \cap H_2$ is canonically embedded into
$H_1 \cap H_2$ (i.e. $K_\Gamma$ embeds $\Gamma$). 
Fano called them {\it Variet\`a algebriche a tre dimensioni a curve sezioni canoniche} \footnote {Algebraic Varieties of dimension $3$ with canonical curve section.}, \cite{Fa2,Fa3,Fa4,Fa5}. 

He considered this class of varieties to provide a counterexample to a Castelnuovo type rationality criteria for $3$-folds and to the L\"uroth problem. Although some of these varieties have all plurigenera and irregularity equal to zero, he understood that some of them should not be rational. It is generically accepted that none of Fano's attempts to prove nonrationality should be considered rigorous. 
The first modern and accepted proof of the nonrationality of quartic $3$-folds in $\proj ^4$ is the celebrated Iskovskikh and Manin's wheareas the nonrationality of the cubic $3$-fold in $\proj ^4$ was proved by Clemens and Griffiths, \cite{CG,IM}. B. Segre constructed  some unirational quartic $3$-fold in $\proj ^4$, \cite{Seg}; therefore these unirational but nonrational $3$-fold represent counterexamples to the L\"uroth problem in dimension $3$.

\smallskip
Fano started also a biregular classification of his varieties in the case of dimension $3$.  Starting from Fano's results a large number of mathematicians have constructed clever theories in the last 50 years, which are among the most spectacular achievements of contemporary mathematics. Initially, Fano's legacy has been taken into account in V. Iskovskikh's work, as well as in V. Shokurov's work, and soon after in that of S. Mori and S. Mukai. The theory of Minimal Models developed by S. Mori gave an enormous impulse; on the one hand the Minimal Model Program changed the approach to the classification of projective varieties and on the other hand the varieties studied by Fano had a central role in the classification.

 \medskip
 The following proposition is a well-known result; we provide a proof for reader's convenience.
 \begin{proposition}
 \label{modFano}
Let $X \subset \proj^N$ be a projective $n$-fold and let $H := \cO_{\proj^N}(1)_{|X}$ be the hyperplane bundle on $X$. Assume for general hyperplanes $H_1, H_2, ..., H_{n-1} \in |H|$ the curve $\Gamma:= H_1 \cap H_2\cap ...\cap H_{n-1}$ is a canonically embedded curve of genus $g$. 
Then $-K_X = (n-2)H$. 

In particular, if $n=3$ the linear system $|-K_X]$ embeds $X$  as a $3$-fold of degree $2g-2$ into projective space of dimension $g+1$,  i.e. $X: = X^{2g-2}_3 \subset \proj^{g+1}$.
\end{proposition}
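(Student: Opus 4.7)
The plan is to derive the equality from iterated adjunction along a flag of linear sections and then to lift the resulting linear equivalence from $\G$ back up to $X$. By Bertini, for general $H_1,\dots,H_{n-1}\in|H|$ the intermediate sections $X=X_0\supset X_1:=X\cap H_1\supset\cdots\supset\G=X_{n-1}$ are all smooth of the expected dimension, and iterating adjunction along this flag gives
\[
K_\G \iso (K_X+(n-1)H)\big|_\G.
\]
The canonical embedding hypothesis reads $K_\G\iso H|_\G$, so
\[
(K_X+(n-2)H)\big|_\G\iso 0.
\]

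The main step is to promote this equivalence back up to $X$. For $n\ge 3$, the Lefschetz hyperplane theorem applied at each step $X_i\subset X_{i-1}$ with $\dim X_{i-1}\ge 3$ gives an injection $\Pic(X_{i-1})\hookrightarrow\Pic(X_i)$. Iterating only down to the surface $S:=X_{n-2}$ (beyond which Lefschetz on $\Pic$ fails), I obtain $\Pic(X)\hookrightarrow\Pic(S)$, and by adjunction the class $K_X+(n-2)H$ maps to $K_S$ under this injection. So it is enough to prove $K_S\iso 0$ on $S$.

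On $S$ the general $\G\in|H|_S|$ is a canonically embedded curve, which places $S$ in the classical Saint-Donat situation and forces $S$ to be a K3 surface; in particular $K_S\iso 0$. A self-contained route is to combine $K_S\cdot H=0$ (read off from $K_S|_\G\iso 0$) with Hodge index to get $K_S^2\le 0$, use ampleness of $H$ to rule out nontrivial effective representatives of $\pm K_S$, and run Kodaira vanishing on the twin short exact sequences
\[
0\to\cO_S(K_S-H)\to\cO_S(K_S)\to\cO_\G\to 0, \qquad 0\to\cO_S(-H)\to\cO_S\to\cO_\G\to 0
\]
to obtain $h^0(K_S)\ge 1$ and $h^1(\cO_S)=0$, whence $K_S\iso 0$.

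Finally, for $n=3$ the "in particular" is immediate: $-K_X=H$ makes the given embedding the anticanonical one, $H^3=\deg\G=2g-2$, and the canonical embedding of $\G$ in $\proj^{g-1}$ forces $N=g+1$, so $X=X_3^{2g-2}\subset\proj^{g+1}$. The genuinely delicate step in the argument above is the descent $K_S|_\G\iso 0\Rightarrow K_S\iso 0$: Lefschetz injectivity of Picard groups breaks down at dimension $2$, so one must invoke the K3-characterization of surfaces with canonical hyperplane sections (or do real vanishing-theorem work on $S$ itself) rather than argue purely formally.
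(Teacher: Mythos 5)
Your overall architecture is sound and the outer steps are fine: iterated adjunction gives $(K_X+(n-2)H)|_\G \iso 0$, and descending the triviality of $K_S=(K_X+(n-2)H)|_S$ from the surface section $S$ back to $X$ via Grothendieck--Lefschetz injectivity of $\Pic(X_{i-1})\hookrightarrow\Pic(X_i)$ (valid since each $X_{i-1}$ there has dimension $\ge 3$) is a legitimate alternative to the paper, which instead quotes Weil's equivalence criterion at this point. The paper likewise reduces everything to showing $K_S\iso 0$ (indeed that $S$ is a K3), so up to that reduction the two arguments differ only in bookkeeping.

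The gap is in your ``self-contained route'' to $K_S\iso 0$. Everything hinges on $h^0(K_S)\ge 1$: granted that, $K_S\cdot H=0$ and ampleness of $H$ already force $K_S\iso 0$, and Hodge index is not even needed. But Kodaira vanishing does not deliver this. From
\[
0\to\cO_S(K_S-H)\to\cO_S(K_S)\to\cO_\G\to 0
\]
you get $h^0(K_S)\ge 1$ only if $H^0(\cO_S(K_S))\to H^0(\cO_\G)=\C$ is onto, i.e.\ only if $H^1(S,K_S-H)\iso H^1(S,H)^{\vee}$ vanishes; Kodaira gives $H^1(S,K_S+H)=0$, which is the wrong twist, and the two coincide only once you already know $K_S\iso 0$ --- a circle. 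Likewise the second sequence only yields an injection $H^1(\cO_S)\hookrightarrow H^1(\cO_\G)$, not $h^1(\cO_S)=0$. The missing input --- and the actual content of the paper's proof --- is Noether's theorem that a canonically embedded curve is projectively normal: this makes every restriction $H^0(\cO_S(m))\to H^0(\cO_\G(m))$, $m\ge 1$, surjective, whence by descending induction $H^1(\cO_S(m))=0$ for $m\ge 0$ and then $h^2(\cO_S)=h^1(\cO_\G(1))=1$, which is exactly the $p_g\ge 1$ you need. Your fallback of invoking the Saint-Donat/K3 characterization of surfaces with canonical hyperplane sections is legitimate as a citation, but that characterization is essentially the statement being proved here, and its standard proof is the projective-normality argument just described; so you have correctly located the delicate step but not actually supplied it.
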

  
  \begin{proof}
 Let $S := H_1 \cap H_2\cap ...\cap H_{n-2}$ be a general surface section. Denote by $\Gamma = S\cap H_{n-1}$ a curve section. For $m \geq 0$ consider the exact sequence
  
  $$0 \ra \cO_S(m-1) \ra \cO_S(m) \ra \cO_{\Gamma}(m) \ra 0$$
  and the corresponding long exact cohomology sequence
  
 $$0 \ra H^0(\cO_S(m-1)) \ra H^0(\cO_S(m)) {\xrightarrow {\alpha}} H^0(\cO_{\Gamma}(m)) \ra H^1(\cO_S(m-1))  \ra $$
 
 $${\xrightarrow {\beta}} H^1(\cO_S(m)) \ra H^1(\cO_{\Gamma}(m)) \ra H^2(\cO_S(m-1)) \ra H^2(\cO_S(m))  \ra0.$$
 
 For $m \geq 1$ the map $\alpha$ is onto since a canonically embedded curve, $\Gamma$, is projectively normal (this is a classical result attributed to Noether and Enriques-Petri). Therefore $\beta$ is injective and by decreasing induction on $m$ we have 
 $$H^1(S, \cO_S(m)) =0, \ \ \  m \geq 0.$$
 
 For $m>1$ we have, by Serre duality, $H^1(\Gamma, \cO_{\Gamma}(m))= H^0(\Gamma, \cO_{\Gamma}(1-m)) ^{\vee}= 0$;
 again by decreasing induction, we get 
 $$H^2(S, \cO_S(m)) =0, \ \ \ m > 0.$$
 
  For $m=1$ we have $\dim H^1(\Gamma, \cO_{\Gamma}(1))= 1$, which gives
  $$\dim H^2(S, \cO_S)) = 1.$$
  
  \smallskip
  Since $\dim H^2(S, \cO_S)) = 1$, by Serre duality $K_S$ is effective or trivial.
   By adjunction formula we have
  $$(K_S + \Gamma)^.\Gamma = K_{\Gamma} = \Gamma ^. \Gamma.$$
  hence ${K_S }^. \Gamma=0$. By Kleiman's criterium for the ampleness (of $\Gamma$) this implies that $K_S =0$. Since $H^1(S, \cO_S) =0$, $S$ is a K3 surface.
  
  \medskip
  Adjunction formula on $X$ implies the following
  $$(K_X + (n-2)H) ^. S = K_S =0.$$
  By Weil's equivalence criterium (\cite{Wei}, p.111, Th. 2) this implies $-K_X = (n-2)H$.  
  
  \smallskip
  The computations for the case of $3$-folds are quite straightforward.
 
  \end{proof}

Nowadays we define a Fano manifold as follows.
\begin{Definition} 
\label{Fman}
A smooth projective variety $X$ is called a {\it Fano manifold} if $-K_X$ is ample.

The  {\it index of $X$} is defined as the greatest integer $r$ such that  $- K_X = -rL$ for a line bundle $L$. The ample line bundle $L$ which achieves the maximum is called the {\it fundamental line bundle (or divisor)}.

If $Pic(X) = \Z$, then $X$ is called a  {\it Fano manifold of the first species} or a {\it prime Fano manifold}. 
\end{Definition}

\begin{remark} 
By Proposition \ref{modFano}, the varieties considered by Fano are Fano manifolds with $-K_X$ very ample and index $n-2$. It is straightforward to check that Fano manifolds with $-K_X$ very ample and index $n-2$ have canonical curve section.

The slightly more general definition with $-K_X$ simply ample was not on hand at that time and it is absolutely appropriate for modern taste and techniques. 
\end{remark}

\smallskip
V.A.  Iskovskikh, \cite{Isk1} and \cite{Isk2}, has taken up the classification. By modern tools he has been able to justify and generalize Fano's work, thus obtaining a complete classification of {\it prime Fano $3$-folds}. 
If $g$ is the genus of the curve section, he proved that $3 \leq g \leq 12$ and $g \not= 11$. For every such $g$ he gave a satisfactory description of the associated Fano variety. He used the Fano's method of double projection from a line. In particular he needed the existence of a line and the existence of smooth divisor in the linear system $|-K_X|$. These are delicate results proved later by Shokurov in \cite{Sh1} and \cite{Sh2}. 

Among his results, a nice one is the construction of a prime Fano manifold $X^{22}_3  \subset \proj ^{13}$, which apparently was omitted by Fano. Some years later, S. Mukai gave a new method to classify prime Fano-Iskovskikh $3$-folds based on vector bundle constructions, \cite{Mu1}, providing a new description of $X^{22}_3 \subset \proj ^{13}$ (see also \cite{MU}).

In the same period, S. Mori and S. Mukai \cite{MM} gave a classification of all Fano $3$-fold with Picard number greater than or equal to $2$ (i.e. not prime), and they finished the classification of Fano $3$-fold. Their classification is based on the Iskovskikh's and on the Mori Theory of extremal rays, via the so called "two rays game".

Fano manifolds of any dimension $n$ and index $r \geq (n-2)$ were classified by Kobayashi and Ochiai (\cite{KO}, index $n$), T. Fujita ( \cite{Fu}, index $(n-1)$) and S. Mukai; the latter classified all Fano manifolds of index $(n-2)$ under the assumption that the fundamental divisor has an effective smooth member, \cite{Mu1}. Later on, M. Mella proved that this assumption is always satisfied \cite{Me}.

\medskip
Let us now briefly describe the purpose and the content of our paper.  In 1949, Fano published in {\it Rendiconti dell'Accademia dei Lincei} his last paper on $3$-folds with canonical sectional curves, under the title {\it Su una particolare variet\`a a tre dimensioni a curve-sezioni canoniche} \footnote{On a special $3$-fold with canonical curve section} \cite{Fa5}. At the time he was 78 years old and he died three years afterwards.
In the paper, he constructed and described a $3$-fold of the type $X^{22}_3 \subset \proj^{13}$ with canonical curve section, which we like to call Fano's last Fano. At first, we even thought that this was the variety which was missing in his classification, as claimed by Iskovskikh; very soon we realized that this variety is not {\it prime}, i.e. it has Picard rank $2$. Therefore it is not isomorphic to either the Iskovskikh nor to the Mukai example and it should be searched in the Mori-Mukai classification.

Fano's paper was almost never quoted after its publication and it has been long ignored by most modern mathematicians. Very likely, this is due to the fact that L. Roth cited the paper on page $93$ of his book {\it Algebraic Threefolds} (1955). He wrote that Fano examined {\it a particular fourfold of the third species ...}; probably Roth read the paper too quickly and did not realize that Fano was actually searching for a $3$-fold and not (only) for a $4$-fold.

\smallskip
In Section 2, we will report on Fano's construction, using his own words in Italian, with our translation in English. His arguments are correct but very often without a complete proof. We provide diverse proofs in modern language and try to use results and techniques available at that time in order to support Fano's correctness. We hope that the reader might enjoy, as we did, the beauty as well as the elegance and simplicity of Fano's example.

In Section 3, we construct Fano's with modern tools, in particular via the Hilbert scheme of zero cycles on a rational surface. As a consequence we can easily point out the corresponding example in the Mori-Mukai list. Within the modern description the reader can easily derive the properties of the example studied in the second part of Fano's paper, for instance its rationality and the description of ruled sub-surfaces.

\section{Fano's Construction of a $X^{22}_3 \subset \proj ^{13}$,}

\subsection{Construction and smoothness}

We report and comment on the first section of Fano's paper of 1949, \cite{Fa5}, using {\it verbatim} Fano's words in Italian, providing an English translation in the footnotes. \\
{\it Ho incontrato recentemente una variet\`a a tre dimensioni a curve-sezioni canoniche, che naturalmente appartiene alla serie delle $M_3^{2p-2}$ di $S_{p+1}$ (qui  $p=12$),
oggetto di mie ricerche in quest'ultimo periodo, ma non ha finora richiamata particolare attenzione. Ne dar\`o qui un breve cenno.}
\footnote {Recently, I discovered a $3$-dimensional variety with canonical sectional curves, which naturally belongs to the collection of $M_3^{2p-2}$ of $S_{p+1}$ (here $p=12$), which was the topic of my research in this last period, but which up to now has not drawn special attention. I will give here a brief mention}

Let us explain Fano's notation. $S_{n}$ is what we denote now with $P_{\C}^{n}$ whereas $M_m^{d}$ is a subvariety of $P_{\C} ^{n}$ of dimension $m$ and degree $d$ (note that in what follows he uses the world {\it ordine} for degree). Therefore, in the notation of the previous section, he discovered a $X^{22}_3 \subset \proj ^{13}$ of dimension $3$ with canonical sectional curves. We proceed with his notation.

\smallskip
{\it Consideriamo nello spazio $S_5$ una rigata razionale normale $R^4$ (non cono),
che per semplicit\`a supponiamo del tipo pi\`u generale, cioè con $\infty ^1$ coniche direttrici irriducibili;  
e con essa la variet\`a $\infty ^4$ delle sue corde. Quale ne \`e l'immagine $M_4$ nella Grassmanniana 
$M_8^{14}$ di $S_{14}$ delle rette di $S_5$?}
\footnote {Let us consider in the space $S_5$ a normal rational ruled surface $R^4$ (not a cone), which for simplicity we suppose of general type, that is, with $\infty ^1$ irreducible conics as ruling; with it we consider the variety $\infty ^4$ of its chords. What is the image of $M_4$ in the Grassmannian $M_8^{14}$ in  $S_{14}$ of lines in $S_5$?}

\smallskip
The ruled surface $R^4$ is viewed as the image of $\pu \times \pu$ embedded in $\proj^5$ by the complete linear system $|(1,2)|$. It is rational and it has degree $4$. It is normal because the general hyperplane section is a normal rational curve of degree $4$ in $\proj ^4$, the image of general element of $|(1,2)|$. 
Thus, the surface $R^4$ has two rulings: one is given by the {\it lines} contained in $R^4$, the images of the divisors in the  complete linear system $|(0,1)|$, and one given by the {\it conics} mentioned by Fano,  which are all irreducible conics in $R^4$, the images of the divisors in the  complete linear system $|(1,0)|$. 

\smallskip
Consider the Grassmannian of the lines in $\proj ^5$ embedded via the Pl\"ucker embedding and following Fano denote it as $M^{14}_8 \subset S_{14}$. To explain Fano's notation note that it is a compact complex manifold of dimension $8$ which is embedded in a projective space of dimension $\frac{5\cdot 6}2-1=14$. Its degree as a subvariety  can be computed by standard Schubert calculus, namely it is equal to $\sigma_1^8=1+3^2+2^2=14$. 

The variety $M_4$ is defined by Fano as the subset of  the Grassmannian of lines in $\proj ^5$ given by the {\it chords} of $R^4$. Since we are looking for a complete variety, we need to interpret the word "corde" in a broad sense, that is secant and tangent lines. 

At this point, it seems to us that Fano gives for granted that $M_4$ is a smooth irreducible variety of dimension $4$.  This is in our opinion not  obvious, in particular its smoothness. For these purposes we formulate the proposition below and dedicate some pages at its proof.  In the next section, we provide a second, more geometric, proof.

\begin{proposition} 
\label{prop: M4 is smooth}
$M_4 \subset M_8^{14}$ is an irreducible smooth variety of dimension $4$.
\end{proposition}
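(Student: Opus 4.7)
The plan is to parameterize $M_4$ by the Hilbert scheme $\Hilb^2(R^4)$ --- smooth and irreducible of dimension $4$ by Fogarty's theorem --- via the natural morphism $\phi : \Hilb^2(R^4) \to M_8^{14}$ sending a length-$2$ subscheme $Z \subset R^4$ to the unique line of $\mathbb{P}^5$ it spans. The image of $\phi$ is $M_4$; and since $\phi$ is generically injective (a general chord meets $R^4$ in exactly its two endpoints), $M_4$ is automatically irreducible of dimension $4$.

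The key ingredient for smoothness is the classical fact that the rational normal scroll $R^4$ is cut out by quadrics in $\mathbb{P}^5$. Hence by B\'ezout any line meeting $R^4$ in a subscheme of length $\geq 3$ must lie in every quadric containing $R^4$, and therefore in $R^4$ itself. This identifies the non-injectivity locus of $\phi$ with the $\mathbb{P}^1$-family $\Lambda \subset M_8^{14}$ of $(0,1)$-rulings of $R^4$, over which $\phi^{-1}(\lambda) = \Hilb^2(\lambda) \cong \mathbb{P}^2$. Away from $\Lambda$, $\phi$ is bijective, and its differential is injective (any kernel vector would correspond to an infinitesimal deformation of $Z$ inside $\phi(Z) \cap R^4 = Z$, which is trivial); hence $\phi$ restricts to an open immersion, and $M_4 \setminus \Lambda$ is smooth.

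The main obstacle is smoothness along $\Lambda$, which I would handle by a direct tangent-space computation. Identifying $T_\lambda M_8^{14} = \mathrm{Hom}(\lambda, \mathbb{C}^6/\lambda)$, one computes $T_\lambda M_4$ as the linear span over $Z \in \phi^{-1}(\lambda)$ of the images $d\phi_Z(T_Z \Hilb^2(R^4))$. At each $Z = \{p_1, p_2\} \subset \lambda$, the two tangent directions of $R^4$ at $p_1, p_2$ lying along $\lambda$ are killed, and the surviving image is the $2$-plane generated by the two transverse directions to $R^4$ at $p_1, p_2$; as $Z$ varies over the $\mathbb{P}^2$-fiber, these $2$-planes sweep out a single $4$-dimensional subspace $\mathrm{Hom}(\lambda, N_\lambda) \subset \mathrm{Hom}(\lambda, \mathbb{C}^6/\lambda)$, where $N_\lambda \subset \mathbb{C}^6/\lambda$ is the $2$-plane of transverse directions along $\lambda$. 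Combined with $\dim T_\lambda M_4 \geq \dim M_4 = 4$, this yields $\dim T_\lambda M_4 = 4$, and smoothness along $\Lambda$ follows.
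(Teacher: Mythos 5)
Most of your argument is sound, and it is in fact close to the route the paper itself takes in its Section 3, where the morphism $\Hilb^{2}(\pu\times\pu)\to M_4$ is identified with an extremal contraction $\psi_1$. Irreducibility and the dimension count via Fogarty are fine, and your observation that a line meeting $R^4$ in a scheme of length $\geq 3$ lies on every quadric through $R^4$, hence in $R^4$ itself, correctly reduces everything to the curve $\Lambda$ of ruling lines: away from $\Lambda$ the map $\phi$ is injective with injective differential, and being proper it is an isomorphism onto $M_4\setminus\Lambda$. This cleanly covers, in particular, the tangent lines to the conics (the stratum the paper calls $2_2$ and treats by a separate chart computation); the paper's own Section 2 proof instead uses the $GL_2(\C)^2$-action to cut $M_4$ into five orbits and verifies smoothness by exhibiting explicit four-parameter holomorphic charts around points of the two closed strata.

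The step that actually matters, however --- smoothness along $\Lambda$, the paper's stratum $1_1$ --- has a genuine gap. You assert that $T_\lambda M_4$ \emph{equals} the linear span of the images $d\phi_Z\bigl(T_Z\Hilb^2(R^4)\bigr)$ over $Z$ in the fiber. That span is only \emph{contained} in $T_\lambda M_4$, and the containment can be strict: for $t\mapsto(t^2,t^3)$ the span over the fiber of the origin is $0$, while the Zariski tangent space of the cuspidal image is $2$-dimensional. So your (correct) computation that the span is the $4$-dimensional space $\operatorname{Hom}(\lambda,N_\lambda)$ only gives the lower bound $\dim T_\lambda M_4\geq 4$, which you already had; what is needed is the opposite inequality $\dim T_\lambda M_4\leq 4$, and no part of your argument supplies it. To close the gap you must either produce four local equations for $M_4$ near $\lambda$ with independent differentials --- this is exactly what the paper does in its Grassmannian chart, solving for $a_{02},b_{02},a_{12},b_{12}$ in terms of the remaining four coordinates --- or invoke a contraction theorem: all positive-dimensional fibers of $\phi$ are copies of $\proj^2$ lying over the curve $\Lambda$, so by Corollary 4.11 of \cite{AW} the contraction is a smooth blow-down and the target is smooth. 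The latter is precisely how the paper's Section 3 settles this point, so your strategy can be repaired, but not by the tangent-space argument as written.
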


For the proof we need to set up some notation and preliminaries.
We take homogenenous coordinates $([x_0,x_1],[y_0,y_1])$ on $\pu \times \pu$ and we denote by $\pi_x, \pi_y \colon \pu \times \pu \rightarrow \pu$ the natural projections, namely:
\begin{align*}
\pi_x([x_0,x_1],[y_0,y_1])=&[x_0,x_1],&
\pi_y([x_0,x_1],[y_0,y_1])=&[y_0,y_1].&
 \end{align*} 

We take coordinates $[z_{ij}]$ in $\proj ^{5}$ so 
that the embedding $\pu \times \pu \rightarrow R^{4} \subset \proj ^{5} $ is given by identifying $ z_{ij}=x_i y_0^{2-j} y_1^j$, that is: 

\begin{equation}
\label{eq:z=xy}
\begin{aligned}
z_{00}=&x_0y_0^2,&
z_{01}=&x_0y_0y_1,&
z_{02}=&x_0y_1^2,\\
z_{10}=&x_1y_0^2,&
z_{11}=&x_1y_0y_1,&
z_{12}=&x_1y_1^2.
\end{aligned}
\end{equation}

\smallskip
$R^4$ is not the Veronese surface, therefore secant and tangent lines cover all $\proj^5$
(Severi \cite{Sev} proved that the Veronese surface, that is $\proj^2$ embedded by $\cO(2)$,  is the only surface in $\proj ^5$ whose secant and tangent lines do not cover $\proj^5$).

\smallskip
Consider the $3$-fold $G_3^3 \subset \proj ^5$ defined by the condition
\[
rk
\begin{pmatrix}
z_{00}&z_{01}&z_{02}\\
z_{10}&z_{11}&z_{12}\\
\end{pmatrix}=1.
\]

We claim that $G^3_3$ contains $R^4$. More precisely if we consider the ruling by conics of $R^4$, $G_3^3$ is the union of the planes containing these conics. It is obvious that for each point in $G^3_3$ there are at least two (in fact infinitely many) secants  to $R^4$: the point, say $p$, is contained in a plane, say $\pi$, in $G_3^3$ containing a conic of the ruling by conics of $R^4$ and thus all the lines through $p$ contained in $\pi$ are secants to $R^4$. We show that the converse is also true.

\begin{Lemma}
\label{uniquesec}
For each point $p \not \in G^3_3$ there is a unique secant of $R^4$ through it.

In particular for each point on a general line $l \subset \proj ^5$ (i.e. disjoint from $G^3_3$) there is a unique secant to $R^4$ through it.
\end{Lemma}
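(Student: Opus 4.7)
The plan is to translate the lemma into linear algebra via the parametrization \eqref{eq:z=xy}. The first step is to observe that the image $\phi(x,y) \in R^4$ of $([x_0:x_1], [y_0:y_1])$ is the rank-one $2 \times 3$ matrix
\begin{equation*}
\begin{pmatrix} x_0 \\ x_1 \end{pmatrix}\bigl(y_0^2\ \ y_0 y_1\ \ y_1^2\bigr),
\end{equation*}
so identifying $\proj^5$ with the projectivization of $2 \times 3$ matrices, $R^4$ becomes the locus of matrices $u \otimes Q^T$ with $u \in \C^2 \setminus \{0\}$ and $Q$ on the Veronese conic $V = \{Q_0 Q_2 = Q_1^2\} \subset \proj^2$; by construction $G_3^3$ is then the locus of matrices of rank $\leq 1$. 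A chord of $R^4$ passing through $p$ with contact points $\phi(u_1, y_1), \phi(u_2, y_2)$ corresponds, up to swap and rescaling, to a decomposition $M_p = u_1 Q_1^T + u_2 Q_2^T$ with $Q_i \in V$, where $M_p$ denotes the $2 \times 3$ matrix of coordinates of $p$.

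Under the assumption $p \not\in G_3^3$, the matrix $M_p$ has rank $2$; hence in any such decomposition $u_1, u_2$ must be linearly independent, and $Q_1, Q_2$ must span the row-span $\ell \subset \proj^2$ of $M_p$, a well-defined line. Thus $Q_1, Q_2 \in \ell \cap V$, and by B\'ezout this intersection is a length-two subscheme of $V$. Next I would treat the two cases separately. In the generic case $\ell \cap V$ consists of two distinct points $Q_1, Q_2$, which form a basis of the underlying $2$-plane of $\ell$ in $\C^3$; writing the two rows of $M_p$ uniquely in this basis determines $u_1, u_2$, so the decomposition is unique up to swap and gives the required unique secant.

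The main delicate point is the degenerate case where $\ell$ is tangent to $V$ at some point $Q$. Here setting $Q_1 = Q_2 = Q$ would force $M_p$ to have rank $1$, which is excluded; instead the length-two subscheme $2Q \subset V$ corresponds to the limiting ``tangent decomposition'' $M_p = u \otimes Q^T + u' \otimes w^T$, where $w \in \C^3$ is a tangent direction to $V$ at $Q$. Writing the rows of $M_p$ in the basis $\{Q, w\}$ of $\ell$ makes $u, u'$ unique, and the resulting line in $\proj^5$ is a tangent line to $R^4$, which is the unique chord of $R^4$ through $p$. Finally, the ``in particular'' statement follows at once from the first part since $G_3^3$ has codimension $2$ in $\proj^5$, so a general line $l \subset \proj^5$ is disjoint from $G_3^3$ and every point of $l$ lies outside $G_3^3$.
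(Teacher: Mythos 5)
Your proof is correct, and it takes a genuinely different route from the one in the paper. The paper obtains existence from Severi's theorem (the Veronese surface is the only surface in $\proj^5$ whose chords fail to fill $\proj^5$) and proves uniqueness by contradiction: two distinct chords through $p$ span a plane $\pi$; the hyperplanes through $\pi$ cut a codimension-three subsystem of $|\cO_{\pu\times\pu}(1,2)|$, which is shown to have no fixed component, so some hyperplane through $\pi$ meets $R^4$ in a rational normal quartic carrying both chords, and $\pi$ would then cut on that quartic a pencil of degree-$4$ divisors with base locus of length at least $4$, which is absurd. You instead read $\proj^5$ as $\proj(\C^2\otimes\C^3)$, identify $R^4$ with the rank-one tensors $u\otimes Q^T$ with $Q$ on a smooth conic $V\subset\proj^2$ and $G^3_3$ with the rank-$\le 1$ locus, and turn a chord through $p$ into a two-term rank decomposition of the rank-two matrix $M_p$; uniqueness then reduces to B\'ezout for the row span $\ell$ of $M_p$ against $V$. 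What your approach buys: it is constructive, it proves existence without invoking Severi, and it isolates exactly where tangent lines occur (namely when $\ell$ is tangent to $V$). What the paper's approach buys: it is synthetic, close to Fano's own language, and does not depend on a determinantal presentation of the surface. The only place where you are slightly terse is the tangent case: to see that the contact point is $u'\otimes Q^T$ and that no other tangent line through $p$ exists, one should note that a line meets $R^4$ doubly at $u_0\otimes Q_0^T$ exactly when it lies in the embedded tangent plane $\{vQ_0^T+c\,u_0w_0^T\}$, all of whose row spans lie on the tangent line to $V$ at $Q_0$; since $\ell$ is tangent to $V$ at the single point $Q$, this pins down $Q_0=Q$ and $u_0\propto u'$. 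This is easily supplied and does not affect correctness.
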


\begin{proof}
The existence follows from the above quoted result of Severi \cite{Sev} since $R^4$ is not the Veronese surface.

To prove uniqueness, we argue by contradiction. Assume there are two distinct secants lines through $p$, say $s_1 \neq s_2$. Let $\pi$ be the plane spanned by $s_1$ and $s_2$.
The hyperplanes containing $\pi$ define a codimension $3$ subsystem $\Gamma$ of $|\cO_{\pu \times \pu}(1,2)|$. 

$\Gamma$ does not have fixed components: assume by contradiction that $\Gamma$ has a fixed component (with reduced part) $F$. By dimension reasons, $F$ has to be contained in $|\cO_{\pu \times \pu}(1,0)|$ or in $|\cO_{\pu \times \pu}(0,1)|$, since the movable part has dimension $2$. $F$ can not be in $|\cO_{\pu \times \pu}(1,0)|$, since the plane $\pi$ coincides with the intersection of all hyperplanes through an irreducible conic in $R^4$, so that $p$ is contained in $G^3_3$, i.e. a contradiction. 
$F$ can not be in $|\cO_{\pu \times \pu}(0,1)|$, because the plane $\pi$ contains a line of $R^4$, say $r$. By assumption $r$ is neither $s_1$ nor $s_2$, so each $s_j$  intersects $R^4$ in at least one point $p_i$ out of $r$. On the other hand, there is no plane in $\proj^5$ containing a line and two more distinct points all contained in $R^4$, so $p_1=p_2$. Then both $p_i$ coincide with $p$ contradicting the assumption  $p \not\in R^4$.

Since  $\Gamma$ has no fixed components, there is a hyperplane containing both $s_1$ and $s_2$ that cuts an irreducible curve of $R^4$; however any irreducible element of $|\cO_{\pu \times \pu}(1,2)|$ is smooth so
this is then a smooth rational normal quartic $C^4$. Both $s_1$ and $s_2$ are secants to it, cutting respectively subschemes $\delta_1$ and $\delta_2$ of length at least two on $C^4$. Since  $p \not\in R^4$, $\delta_1 \cap \delta_2=\emptyset$ and therefore the plane  $\pi$ gives a pencil in $|\cO_{C^4}(1,2) \cong \cO_{\pu}(4)|$ with fixed locus of degree at least $4$, a contradiction.
\end{proof}

\begin{proof} [Proof of Proposition \ref{prop: M4 is smooth}]
Consider the rational map
\[
(\pu \times \pu)^2 \dashrightarrow M_4 \subset M^{14}_8
\]
associating, to each pair $(p,q)$ of distinct points $p\neq q \subset \pu \times \pu$, the unique line $\overline{pq}$ through them. 

This is a dominant map, hence $M_4$ is irreducible.
 
 The claim about the dimension  $\dim M_4=\dim (\pu \times \pu)^2 = 4$ follows since this map is generically finite. To show this take  a secant line $r$ of $R^4$ intersecting $R^4$ transversally; for example the line $z_{01}=z_{02}=z_{10}=z_{11}$ which intersects $R^4$ trasversally in $[1,0,0,0,0,0]$ and $[0,0,0,0,0,1]$, the images of the points $([1,0],[1,0])$ and $([0,1],[0,1])$ of $\pu \times \pu$. Then $r\cap R^4$ is a finite set $p_1,\ldots, p_l$ of $l \geq 2$ points and the pre image of $r$ in $(\pu \times \pu)^2$ is the set of pairs $(p_i,p_j)$ with $i\neq j$, finite as well.
 
\smallskip
From now on we prove smoothness.
We first show that the action (of possibly a subgroup) of the automorphism group of the variety $M_4$ splits it in finitely many orbits; this reduces our claim to finitely many local computations. 

Consider the standard action of $\proj GL_2(\C)$ on $\pu$ and its standard {\it linearization}, the action of  $GL_2(\C)$ on the coordinate ring of $\proj^1$, $\C[x_0,x_1]=\bigoplus_d H^0(\cO_{\pu}(d))$.
This associates to each matrix a ring homomorphisms  as follows:
\[\begin{pmatrix}
a_{11}&a_{12}\\
a_{21}&a_{22}
\end{pmatrix}\colon \
\alpha_0 x_0+\alpha_1 x_1\mapsto (a_{11}\alpha_0+a_{12}\alpha_1)x_0+(a_{21}\alpha_0+a_{22}\alpha_1)x_1.
\]

Taking two copies of this action, we get an action of $GL_2(\C)^2$ on $\pu \times \pu$ and a linearization of it to each $H^0(\cO_{\pu \times \pu}(d_1,d_2))$; in particular on $H^0(\cO_{\pu \times \pu}(1,2))$, which we identify via  \eqref{eq:z=xy} to $H^0(\cO_{\proj ^5}(1))$. The naturally induced action of $GL_2(\C)^2$ on $\proj ^5$ preserves $R^4$, inducing on it the action (on $\pu \times \pu$) we started with.
Finally, the action on $\proj^5$ induces an action on the Grassmannian  of the lines in $\proj^5$ that preserves $M_4$. Therefore we can define an action of  $GL_2(\C)^2$  on $M_4$.

We show now that this action splits $M_4$ in exactly $5$ orbits.

We say that two distinct points $p\neq q$ of $R^4$ are {\it in general position with respect to the rulings} if $p$ and $q$ belong to two different lines and to two different irreducible conics in $R^4$.  

For every element $g \in GL_2(\C)^2$, if two points $p$ and $q$ are in general position with respect to the rulings, then their images $gp$ and $gq$ are in general position with respect to the rulings as well. In particular the orbit of $\overline{pq} \in M_4$ is made by secants on two distinct points in general position with respect to the rulings.
Since the action of $GL_2(\C)$ on $\pu$ is $2-$transitive \footnote{In fact it is $3-$transitive, but we only need $2-$transitivity here} we conclude that the Zariski open subsets of $M_4$ of the secants on two points in $R^4$ in general position with respect to the rulings form an orbit. 

A similar argument shows that the secants through two distinct points  belonging to the same irreducible conic of $R^4$ form a second orbit of dimension $3$. A third orbit is obtained by considering the secants through two distinct points  belonging to the same line of $R^4$: these are the lines contained in $R^4$, forming an orbit of dimension $1$.

Considering the lines that are tangent to $R^4$ we get similarly three orbits. However, the tangent to a point in the direction of the line through it coincides with the line itself, thus giving an orbit that has been already considered. 

\smallskip
Summing up, $GL_2(\C)^2$ decomposes $M_4$ in $5$ orbits as follows, the notation for each stratum $r_d$ is settled so that $d$ is its dimension:

\begin{itemize}
\item[$0_4$] the secants through two points in general position with respect to the rulings;
\item[$0_3$] the tangents to a point $p$ in a direction different from both the directions of the line and of the conic through it;
\item[$2_3$] the secants through two points $p \neq q$ that belong to the same irreducible conic;
\item[$2_2$] the lines tangent to any irreducible conic contained in $R^4$;
\item[$1_1$] the lines contained in $R^4$.
\end{itemize}

\medskip
It is enough to show the smoothness of $M_4$ in one point for each orbit.
Moreover, since the smooth locus of a variety is a Zariski open subset, it is enough to show the smoothness at a point of  $2_2$ and at a point of $1_1$: in fact a neighborhood of them intersects all other orbits.

\smallskip
Let us start with a point of $1_1$.

We consider a "standard" chart for the Grassmannian  as follows. 
To each rank $2$ matrix 
\[
\begin{pmatrix}
a_{00}&a_{01}&a_{02}&a_{10}&a_{11}&a_{12}\\
b_{00}&b_{01}&b_{02}&b_{10}&b_{11}&b_{12}\\
\end{pmatrix}
\]
we associate the line through the points $a,b\in \proj ^5$ given by its rows. In other words the coordinate $z_{ij}$ evaluated in $a$ and $b$ gives $a_{ij}$ and $b_{ij}$, respectively.

The matrices 
\[
\begin{pmatrix}
1&a_{01}&a_{02}&0&a_{11}&a_{12}\\
0&b_{01}&b_{02}&1&b_{11}&b_{12}\\
\end{pmatrix}
\]
give a standard chart of the Grassmannian of the lines in $\proj^5$, an open subset parametrized by the affine coordinates $a_{01},a_{02},a_{11},a_{12},
b_{01},b_{02},b_{11},b_{12}$.
Note that its origin 
\[
\begin{pmatrix}
1&0&0&0&0&0\\
0&0&0&1&0&0\\
\end{pmatrix}
\] 
corresponds  to the line $z_{01}=z_{02}=z_{11}=z_{12}=0$  in $R^4$, which by \eqref{eq:z=xy} originates from the line $y_1=0$ in $\pu \times \pu$. So the origin is a point in the orbit $1_1$. 

\smallskip

A point in this chart corresponds to the line in $\proj ^5$  given by parametric equations:
\[\begin{pmatrix}
\alpha,&\alpha a_{01}+ \beta b_{01},&\alpha a_{02}+\beta b_{02},&\beta &\alpha a_{11}+\beta  b_{11},&\alpha a_{12}+\beta  b_{12}\\
\end{pmatrix}.
\]
with parameters $\alpha,\beta$.
The rational surface $R^4$ is contained in the following three hyper quadrics in $\proj ^5$ :
\begin{align*}
z_{00}z_{11}&=z_{01}z_{10},&z_{00}z_{02}&=z_{01}^2,&z_{10}z_{12}&=z_{11}^2.
\end{align*}

Therefore, if a point in the above parametrized line belongs to $R^4$, it will a fortiori belong to each of these hyper quadrics, that is
\begin{equation}\label{eqn: r=2}
\alpha(\alpha a_{11}+\beta  b_{11})=\beta(\alpha a_{01}+ \beta b_{01}),
\end{equation}
\begin{equation}\label{eqn: A}
\alpha(\alpha a_{02}+\beta b_{02})=(\alpha a_{01}+ \beta b_{01})^2 ,
\end{equation}
\begin{equation}\label{eqn: B}
\beta(\alpha a_{12}+\beta b_{12})=(\alpha a_{11}+ \beta b_{11})^2 .
\end{equation}

These are three homogeneous equations of degree $2$ in the variables $\alpha,\beta$.
If the line  is secant (or tangent) then these three homogeneous equations define a scheme of $\pu$ of length at least $2$. Therefore the equations must be pairwise proportional.  This will allow to express the four variables $a_{ij}$ as holomorphic functions in terms of the other four as follows.

Assume $a_{11}\neq 0$, $b_{01} \neq 0$. The coefficient of $\alpha^2$ in \eqref{eqn: B} is equal to  $a_{11}^2$. Therefore, in order to have \eqref{eqn: r=2} proportional to \eqref{eqn: B} we have to multiple it by $-a_{11}$. Comparing the other coefficients, we get the following two conditions:
\begin{align*}
2a_{11}b_{11}-a_{12}=&a_{11}(b_{11}-a_{01})&
b_{12}-b_{11}^2&=a_{11}b_{01}&
\end{align*} 

Similarly, looking at the coefficient of $\beta^2$, we see that \eqref{eqn: A} equals \eqref{eqn: r=2}  multiplied by $b_{01}$. Comparing the other coefficients, we get
\begin{align*}
2a_{01}b_{01}-b_{02}=&b_{01}(a_{01}-b_{11})&
a_{02}-a_{01}^2&=a_{11}b_{01}&
\end{align*} 

We use these four equations to express the variables $a_{02},b_{02},a_{12},b_{12}$ as holomorphic functions of the others. Hence we obtain a smooth parametrization given by the matrices of the form
 \[
\begin{pmatrix}
1&a_{01}&a_{01}^2+a_{11}b_{01}&0&a_{11}&a_{11}(a_{01}+b_{11})\\
0&b_{01}&b_{01}(a_{01}+b_{11})&1&b_{11}&a_{11}b_{01}+b_{11}^2\\
\end{pmatrix}.
\]

Note that the parametrization above is an embedding of $\C^4$ in  $M^{14}_8$, thus giving a smooth $4-$dimensional manifold. By construction, this manifold contains the intersection of $M_4$ with our chart
 \footnote{We have parametrized the secants to the complete intersection of three quadrics, which is a variety that contains $R^4$ but does not coincide with it. Hence the parametrized locus contains the intersection of $M_4$ with the chosen chart. }.
Since $M_4$ is irreducible of dimension $4$, they coincide. This proves the smoothness of $M_4$ at a point of $1_1$, the origin of this chart.

\smallskip

Finally, consider a point in $2_2$, a line tangent to a conic in $R^4$, for example the line $z_{02}=z_{10}=z_{11}=z_{12}=0$.
By the same techniques used in the previous case, the reader can show that a neighbourhood of this point in $M_4$ is the smooth manifold given by the matrices 
\[
\begin{pmatrix}
1&0&a_{02}&a_{10}&a_{02}b_{10}&a_{02}(a_{10}+b_{10}b_{02})\\
0&1&b_{02}&b_{10}&a_{10}+b_{02}b_{10}&a_{10}b_{02}+b_{10}a_{02}+b_{10}b_{02}^2\\
\end{pmatrix}.
\]
\end{proof}

\subsection{Computing the degree}
Fano proceeds to compute the degree of $M_4$ with respect to the Pl\"ucker embedding of the Grassmannian.

{\em Determiniamo anzitutto l'ordine di questa $M_4$, ad esempio l'ordine della
superficie sua intersezione con un $S_{12}$, vale a dire della $\infty ^2$
di rette comune alla $\infty ^4$  suddetta e a due complessi lineari. Valendoci di due complessi costituiti
risp. dalle rette incidenti a due $S_3$, questi ultimi contenuti in un $S_4 \equiv \sigma$ e aventi
perci\`o a comune un piano $\pi$, la  $\infty^2$ di rette in parola si spezzer\`a nei due sistemi
delle corde di $R$ contenute in $\sigma$ e di quelle incidenti al piano $\pi$.}
\footnote {Let's first determine the order of this $M_4$, for instance the order of the
surface which is the intersection with an $S_{12}$, that is the order of the $\infty^2$  common lines 
of the  above $\infty^4$ and two linear complexes.
Making use of two constituted complexes
resp. from the incident lines to two $S_3$, both contained in an $S_4 \equiv \sigma$ and having
therefore a common plane $\pi$, the $\infty^2$ of straight lines in question will break in the two systems
of the chords of $R^4$ contained in $\sigma$ and of those incident to the plane $\pi$.}

Fano's strategy is to compute the degree of $M_4$ as the degree of a surface obtained cutting $M_4$ by two hyperplanes sections in $M_8^{14}$. He chooses two very special hyperplanes, given by the lines intersecting two linear subspaces of dimension $3$ in $\proj ^5$ which are in "special position". Namely, these two $\proj ^3$ in $\proj ^5$ intersect along a plane $\pi$; equivalently both of them are contained in a hyperplane $\sigma \subset \proj ^5$. 

He notices that the lines in the intersection of the two hyperplanes in $M_8^{14}$ are exactly the lines contained in $\sigma$ and the lines intersecting $\pi$. In fact, on one side a line intersecting $\pi$ in a point $p$ intersects both $\proj ^3$ in $p$; and also a line contained in $\sigma$ must intersect both $\proj ^3$, which are in $\sigma$.
On the other side, if a line intersects both $\proj ^3$ and does not intersect $\pi$, it intersects them in distinct points; the line therefore contains two distinct points of $\sigma$, thus it is contained in $\sigma$. 

Let us denote with $S^{\sigma}$ the subvariety of $M_4$ of the lines contained in $\sigma$ and with $S_\pi$ the subvariety of lines intersecting $\pi$. Fano uses the following formula, which we like to justify with a proof.

\begin{Lemma}\label{lem: both surfaces are generically smooth}
\[
\deg M_4 =\deg S^\sigma + \deg S_\pi 
\]
\end{Lemma}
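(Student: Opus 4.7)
The plan is to interpret the Lemma as a degree computation via Schubert calculus on the Grassmannian $M^{14}_8 = G(2,6)$. Each of Fano's two hyperplanes $H_i \subset M^{14}_8$ is the Schubert divisor parametrizing lines meeting the $3$-space $\Lambda_i$, so the degree of $M_4 \subset \proj^{14}$ is the degree of the $2$-cycle $[M_4] \cdot [H_1] \cdot [H_2]$. The goal is to show that this $2$-cycle equals $[S^\sigma] + [S_\pi]$.

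First I would verify the set-theoretic decomposition that Fano asserts: a line $\ell \subset \proj^5$ meets both $\Lambda_1$ and $\Lambda_2$ iff either $\ell \subset \sigma$ (when the two incidence points are distinct, both of them necessarily lying in $\sigma$) or $\ell$ meets $\pi = \Lambda_1 \cap \Lambda_2$ (when the two incidence points coincide). Thus $M_4 \cap H_1 \cap H_2 = S^\sigma \cup S_\pi$ set-theoretically. Next I would check that both components are $2$-dimensional, so that the intersection is proper: $S^\sigma$ is the chord variety of the smooth rational quartic curve $R^4 \cap \sigma \subset \sigma \cong \proj^4$ (a generic hyperplane section of $R^4$), and chord varieties of curves are surfaces; $S_\pi$ admits a birational parametrization by $\pi$ thanks to Lemma \ref{uniquesec}, sending each point of $\pi \setminus G^3_3$ to the unique chord of $R^4$ through it.

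Finally, the key step is to show that both $S^\sigma$ and $S_\pi$ appear with multiplicity $1$ in the $2$-cycle $[M_4]\cdot [H_1]\cdot [H_2]$, for then taking degrees yields $\deg M_4 = \deg S^\sigma + \deg S_\pi$. I would do this by invoking the classical Schubert-calculus identity $\sigma_1^2 = \sigma_{1,1} + \sigma_2$ on $G(2,6)$, whose standard proof uses exactly this configuration: it identifies the scheme-theoretic intersection $H_1 \cap H_2$ in $M^{14}_8$ with $\sigma_{1,1}(\sigma) + \sigma_2(\pi)$, both coefficients equal to $1$; intersecting with the proper $4$-cycle $[M_4]$ then gives the cycle identity $[M_4]\cdot [H_1]\cdot [H_2] = [S^\sigma] + [S_\pi]$. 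Alternatively, one can verify transversality by hand at a generic point of each component, using the tangent-space computations afforded by the explicit parametrizations in the proof of Proposition \ref{prop: M4 is smooth}, and checking that the two Plücker linear forms defining $H_1$ and $H_2$ restrict to linearly independent elements of the cotangent space of $M_4$ there. The main obstacle is this multiplicity/transversality check; the set-theoretic and dimensional steps are essentially Fano's own argument made rigorous.
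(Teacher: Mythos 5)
Your primary route is genuinely different from the paper's. The paper works entirely inside the explicit affine chart of the Grassmannian built in the proof of Proposition \ref{prop: M4 is smooth}: it writes the two hyperplane sections in coordinates ($b_{01}=1$ and $a_{01}b_{11}=a_{11}(b_{01}-1)$), observes that their common zero locus $b_{01}-1=a_{01}b_{11}=0$ is visibly reduced with exactly the two components $S^\sigma$ and $S_\pi$, and concludes; your ``alternative'' (checking that the Pl\"ucker forms cut the cotangent space of $M_4$ independently at a generic point of each component) is essentially that computation. Your main route via $\sigma_1^2=\sigma_{1,1}+\sigma_2$ is more conceptual, but as written it has one soft spot: knowing that $[H_1]\cdot[H_2]=[\sigma_{1,1}(\sigma)]+[\sigma_2(\pi)]$ with coefficients $1$ in the Chow ring of the Grassmannian does \emph{not} by itself give $[M_4]\cdot[\sigma_{1,1}(\sigma)]=[S^\sigma]$ and $[M_4]\cdot[\sigma_2(\pi)]=[S_\pi]$; properness of the intersections only yields $[M_4]\cdot[\sigma_{1,1}(\sigma)]=m_1[S^\sigma]$ and $[M_4]\cdot[\sigma_2(\pi)]=m_2[S_\pi]$ with integers $m_i\geq 1$ (once the two surfaces are known to be irreducible, which your parametrizations do give). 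To get $m_1=m_2=1$ you still need generic transversality of $M_4$ with each Schubert variety separately; this follows from Kleiman's transversality theorem applied to a general $PGL_6$-translate of the flag $\pi\subset\sigma$ (the group acts transitively on such flags), or from the explicit tangent-space check you offer as a fallback. With that one sentence added your argument is complete, and it buys something the paper's does not: it shows the decomposition is forced by the Pieri identity and would work verbatim for the chord variety of any surface in $\proj^5$, whereas the paper's chart computation is tied to the specific equations of $R^4$.
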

\begin{proof}
The degree of a variety equals the degree of any hyperplane section, in particular the degree of $M_4$ is equal to the degree of its intersection with the above two (special) hyperplanes. So far, at the moment we have only proved that this intersection coincides with $S^\sigma \cup  S_\pi$ set-theoretically. We need to prove that the intersection is reduced or, equivalently, that there is a choice of the two $\proj ^3$s such that the linear section is smooth in (at least) a point of $S_\pi$ and a point of $S^\sigma$. 

Let us prove it in coordinates. Take the two hyperplane sections giving the secant lines which intersect $\{z_{01}-z_{10}=z_{00}=0\} \hbox{   and } \{ z_{01}-z_{10}=z_{11}=0 \}$ respectively, 
so that $\sigma=\left\{ z_{01}-z_{10}=0\right\}$ and $\pi=\left\{ z_{01}-z_{10}=z_{11}=z_{00}=0 \right\}$.

In the chart near a point of type $1_1$ studied in the proof of Proposition \ref{prop: M4 is smooth}, the two hyperplane sections are defined respectively by $b_{01}=1$ and $a_{01}b_{11}=a_{11}(b_{01}-1)$. Their intersection is the locus $b_{01}-1=a_{01}b_{11}=0$. It is smooth at the general point of both components,  namely $S^\sigma$, which is $b_{01}-1=a_{01}=0$, and $S_\pi$, which is $b_{01}-1=b_{11}=0$.
\end{proof}

\medskip
Fano computes first the degree of $S^\sigma$.

{\em Le prime sono le $\infty^2$ corde di una $C^4$ razionale normale, e nella Grassmanniana delle rette di $\sigma$
hanno per immagine una superficie $\varphi^9$ di $S_9$ di del Pezzo.}
\footnote {The first are the $\infty^2$ chords of a normal rational $C^4$, and in the Grassmannian of the lines of $\sigma$
they have as image a del Pezzo surface  $\varphi^9$ of $S_9$}

 If $\sigma$ is general, $R^4 \cap \sigma$ is a rational normal curve of degree $4$ in $\sigma = \proj ^4$, we denote it by $C^4$. The lines contained in a hyperplane $\sigma \subset \proj ^5$ are mapped by the Pl\"ucker  embedding into a linear $\proj ^9 \subset \proj ^{14}$. Since $S^\sigma$ is the subvariety of $M_4$ of the lines contained in $\sigma$, it is contained in that $\proj ^9$. Fano's claim can be formulated in the following way. 

\begin{Lemma}
\label{lem: DP9}
The surface $S^\sigma$ is embedded in $\proj^9$ as a del Pezzo surface of degree $9$. 
\end{Lemma}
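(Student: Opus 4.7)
The plan is to identify $S^\sigma$, via the Pl\"ucker embedding, with the cubic Veronese image of $\proj^2 \cong \mathrm{Sym}^2 \pu$ in $\proj^9$. For general $\sigma$, $C^4 := R^4 \cap \sigma$ is a rational normal quartic in $\sigma = \proj^4$ (it is the image of a general smooth member of $|(1,2)|$ on $\pu\times\pu$, of degree $4$ and linearly non-degenerate). Since any $k+1 \le d+1$ distinct points on a rational normal curve of degree $d$ in $\proj^d$ span a $\proj^k$, a line in $\sigma$ meets $C^4$ in at most two points. Hence the natural map $\pu \times \pu \to G(1,\sigma)$ sending $(p,q) \mapsto \overline{pq}$ (extended on the diagonal by $(p,p)\mapsto T_p C^4$) is invariant under swapping factors, and the induced set-theoretic map from $\mathrm{Sym}^2 \pu$ is injective with image exactly $S^\sigma$. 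It thus descends to a morphism
\[
\bar\phi \colon \proj^2 \cong \mathrm{Sym}^2 \pu \longrightarrow G(1,\sigma) \subset \proj^9.
\]

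Next, I would parametrise $C^4$ by $\psi(t) = [1:t:t^2:t^3:t^4]$ and compute the Pl\"ucker coordinates of $\overline{\psi(s)\psi(t)}$ as the ten $2\times 2$ minors of the matrix with rows $\psi(s)$ and $\psi(t)$. Each minor is divisible by $t-s$; after dividing, the ten quotients are symmetric polynomials in $(s,t)$ of total degree $3$, which, expressed in the elementary symmetric functions $e_1 = s+t$, $e_2 = st$, become ten cubic polynomials on $\mathrm{Sym}^2 \pu = \proj^2$. A short linear-algebra check verifies that they are linearly independent and so form a basis of $H^0(\proj^2, \cO(3))$.

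It follows that the composition of $\bar\phi$ with the Pl\"ucker embedding is given by the full anticanonical (cubic) linear system on $\proj^2$, so it is projectively equivalent to the $3$-uple Veronese embedding $v_3 \colon \proj^2 \hookrightarrow \proj^9$. In particular, $\bar\phi$ is a closed embedding onto $S^\sigma$, and $S^\sigma$ is projectively equivalent in $\proj^9$ to the anticanonically embedded del Pezzo surface of degree $9$.

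The only step requiring actual work is the verification that the ten cubics arising from the Pl\"ucker minors are linearly independent; once that is established, the conclusion follows formally from the well-known characterisation of the del Pezzo of degree $9$ as the image of the cubic Veronese of $\proj^2$. This linear independence is a routine explicit computation in the basis of cubic monomials on $\proj^2$.
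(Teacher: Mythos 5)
Your proof is correct and reaches the same conclusion as the paper, but it handles the key step by a genuinely different route. Both arguments begin identically: $S^\sigma$ is the image of $\mathrm{Sym}^2 C^4 \cong \proj^2$ under the secant map to $G(1,\sigma)\subset\proj^9$. The paper then determines the defining linear system indirectly: it shows the pullback of a hyperplane lies in $|\cO_{\proj ^2}(d)|$, computes $d=3$ by an enumerative argument (projecting from a general point $p\in C^4$ and counting the three intersections of a plane with the resulting twisted cubic), and separately needs the observation that $S^\sigma$ is linearly non-degenerate in $\proj^9$ to conclude that the system of cubics is complete. Your route is a direct coordinate computation: the ten Pl\"ucker minors of the standard rational normal quartic, divided by their common factor, are symmetric forms of bidegree $(3,3)$, i.e.\ cubics on $\proj^2$, and one checks they form a basis of $H^0(\cO_{\proj ^2}(3))$. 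This settles the degree and the completeness of the system in one stroke, at the cost of an explicit but genuinely routine verification: in the affine symmetric coordinates $e_1=s+t$, $e_2=st$ the ten quotients are $1$, $e_1$, $e_1^2-e_2$, $e_1^3-2e_1e_2$, $e_2$, $e_1e_2$, $e_1^2e_2-e_2^2$, $e_2^2$, $e_1e_2^2$, $e_2^3$, which are visibly triangular with respect to the monomial basis, so the independence you defer is indeed immediate. One small imprecision to fix: with the affine parametrisation $\psi(t)=[1:t:t^2:t^3:t^4]$ the quotients do \emph{not} all have total degree $3$ in $(s,t)$ (their degrees run from $0$ to $6$); the correct statement is that in homogeneous coordinates on the two copies of $\pu$ they are symmetric of bidegree $(3,3)$, equivalently that as polynomials in $e_1,e_2$ they have degree at most $3$ and homogenise, via $e_0=1$, to cubics on $\proj^2$. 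With that adjustment your argument is complete, and it has the minor advantage of not needing the separate non-degeneracy remark that the paper uses to rule out $S^\sigma$ lying in a hyperplane.
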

\begin{proof}
The subvariety $S^\sigma$ in $\proj^9$ is by construction the image of the map from $C^4 \times C^4 \ra \proj^9$ which maps a pair $(p,q)$ on the point of the Grassmannian of the lines in $\proj ^4$ corresponding to the secant $\overline{pq}$, embedded in $\proj^9$ by the standard Pl\"ucker embedding.

It factors through  the second symmetric product of $C^4\cong \pu$, which is isomorphic to $\proj ^2$ as shown by the degree $2$ map $\pu \times \pu \rightarrow \proj^2$  
\[
([x_0,x_1],[y_0,y_1]) \mapsto [x_0y_0,x_0y_1+x_1y_0,x_1y_1]
\]
A hyperplane section in $\proj^9$ pulls back on $\proj^2$ to a divisor in $|\cO_{\proj ^2}(d)|$ and on $\pu \times \pu $ to $|\cO_{\pu \times \pu}(d,d)|$ for some positive integer $d$.
In order to compute $d$ consider the hyperplane section given by the secants of $C^4$ intersecting a fixed general plane $\pi \subset \sigma$ and its pullback $H$ to  $\pu \times \pu$. The intersection of $H$ with  $\{p\} \times C^4 \cong C^4$ is the set of the points $q \in C^4$ such that $\overline{pq}$ is a secant to $C^4$ intersecting $\pi$. Since $H \in |\cO_{\pu \times \pu}(d,d)|$, $d$ equals the number of secant lines through a general point $p\in C^4$  intersecting $\pi$.
Choose $p \not\in \pi$ and take the projection $f_p \colon \sigma \dashrightarrow \proj^3$. The secant lines through $p$ intersecting $\pi$ are projected to the points of the plane $f_p(\pi)$ intersecting the rational normal cubic $f_p(C^4)$, so there are exactly $3$ of them, i.e. $d=3$.

We proved that $S^\sigma$ is the image of a map from $\proj^2$ on $\proj^9$ given by cubics. Note that $S^\sigma$ is not contained in any hyperplane, otherwise, by contradiction,  this would imply that $C^4$ is contained in a hyperplane of $\proj ^4$. 
Therefore $S^\sigma$ is the image by the full linear sistem $|\cO_{\proj ^2}(3)|$, defining the del Pezzo surface of degree $9$.
\end{proof}

\smallskip
Let us now interpret Fano's argument to compute the degree of $S_\pi$.

{\em  Della seconda $\infty^2$ prendiamo l'intersezione con un ulteriore complesso lineare, anche con un 
$S_3 \equiv \tau$ direttore incontrante $\pi$ in una retta. Si ha una rigata composta di una parte luogo 
delle corde di $R$ contenute nello spazio $S_4 \equiv \tau \pi$ e incidenti a $\pi$, la cui immagine \`e
sezione iperpiana di altra $\varphi_9$ di del Pezzo; e di una seconda parte luogo
delle corde incidenti alla retta $\tau \pi$.}
\footnote {Of the second $\infty^2$ we take the intersection with a further linear complex, also with $S_3 \equiv \tau$ as director, meeting $\pi$ in a line. We get a ruled surface composed of a part which is the locus of the chords of $R_4$ contained in the space $S_4 \equiv \tau \pi$ and incident to $\pi$, whose image is a hyperplane section of another del Pezzo $\varphi_9$; and of a second part, the locus of the chords incident to the line $\tau \pi$}

\smallskip
Fano considers a special codimension $2$ subspace $\tau \subset \proj^5$, special in the sense that $\tau$ intersects $\pi$ in a line. 
He takes then a special hyperplane section of $S_\pi$, the one given by the lines that meet $\tau$. By the same argument used above, this curve has two irreducible components: the secant lines in $S_\pi$  contained in the unique $\proj ^4$ generated by $\tau$ and $\pi$ and intersecting $\pi$, call it $C^{\langle \tau,\pi\rangle}_\pi$, and those intersecting the line $\tau \cap \pi$,  call it $C_{\tau \cap \pi}$. 

\begin{Lemma}\label{lem: degSpi}
For a general choice of $\pi, \tau$, the following holds:
\[
\deg S_\pi =\deg C^{\langle \tau,\pi\rangle}_\pi+C_{\tau \cap \pi}
\]
\end{Lemma}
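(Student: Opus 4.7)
The plan is to mimic the argument of Lemma \ref{lem: both surfaces are generically smooth}. The degree of the surface $S_\pi$ equals the degree of any hyperplane section, so it is enough to take a \emph{special} hyperplane section in $\proj^{14}$, namely the one given by the linear complex of the lines of $\proj^5$ meeting $\tau$, and check that the resulting curve equals $C^{\langle \tau,\pi\rangle}_\pi \cup C_{\tau\cap\pi}$ as a cycle, with multiplicity one along each component.

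First I would verify the set-theoretic decomposition: a line $\ell \subset \proj^5$ meeting $\pi$ and meeting $\tau$ either meets $\pi$ and $\tau$ at a common point, which because $\tau\cap\pi$ is a line means $\ell$ meets $\tau\cap\pi$ (giving $C_{\tau\cap\pi}$), or meets them at two distinct points, in which case $\ell$ is contained in the $\proj^4=\langle\tau,\pi\rangle$ and meets $\pi$ (giving $C^{\langle\tau,\pi\rangle}_\pi$). This is a purely linear-algebra verification valid as soon as $\tau\cap\pi$ is a line, and it produces two distinct irreducible curves for general $\pi,\tau$.

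Next I would establish that the scheme-theoretic intersection is generically reduced along each component. Near a general point of each component, one can choose a smooth chart of $M_4$ of the type used in Proposition \ref{prop: M4 is smooth}, then restrict to $S_\pi$ (which is smooth at a general point of each component, by the argument of Lemma \ref{lem: both surfaces are generically smooth} applied to the second hyperplane there) and write down the hyperplane equation $\{\text{line meets }\tau\}$ in those local coordinates. As in Lemma \ref{lem: both surfaces are generically smooth}, one chooses $\tau$ and $\pi$ in sufficiently general position (so that $\langle\tau,\pi\rangle$ is a general $\proj^4$ containing $\pi$ and $\tau\cap\pi$ is a general line in $\pi$) and checks that the differential of this linear condition is nonzero at the chosen smooth points of $C^{\langle\tau,\pi\rangle}_\pi$ and of $C_{\tau\cap\pi}$; the same $1_1$-chart used there already displays the relevant linear forms, so only a single explicit computation per component is needed.

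The main obstacle is bookkeeping the smoothness/transversality at a point of $C_{\tau\cap\pi}$: since this component parametrizes secants to $R^4$ meeting a fixed line, picking a chart adapted to a generic such secant (one hitting $R^4$ at two points in general position with respect to the rulings) keeps everything in the top-dimensional orbit $0_4$, where $M_4$ and $S_\pi$ are both smooth. Once transversality is checked at one point of each of $C^{\langle\tau,\pi\rangle}_\pi$ and $C_{\tau\cap\pi}$, the hyperplane section is generically reduced, hence its degree is $\deg C^{\langle\tau,\pi\rangle}_\pi + \deg C_{\tau\cap\pi}$, which equals $\deg S_\pi$.
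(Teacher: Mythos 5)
Your proposal is correct and takes essentially the same route as the paper: both reduce the statement to exhibiting one special hyperplane section (the lines meeting $\tau$) of $S_\pi$ that decomposes set-theoretically as $C^{\langle \tau,\pi\rangle}_\pi \cup C_{\tau \cap \pi}$ and then checking generic reducedness by an explicit coordinate computation in a chart of the type built in Proposition \ref{prop: M4 is smooth}. The only cosmetic difference is that the paper carries out the whole verification in the single $1_1$-chart already used for Lemma \ref{lem: both surfaces are generically smooth} (where $S_\pi$ is the graph parametrized by $(a_{01},a_{11})$ and the section becomes $a_{01}a_{11}=0$), so your extra precaution of moving to a chart in the open orbit for the component $C_{\tau\cap\pi}$ is not needed.
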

\begin{proof}

The argument, with coordinates, is the same as that used in the proof of Lemma \ref{lem: both surfaces are generically smooth}.
We suppose again that  $\pi=\{z_{01}-z_{10}=z_{00}=z_{11}=0\}$ and we work in the same chart where we proved that  $S_\pi$ is defined by $b_{11}=b_{01}-1=0$, i.e. it is the parametrized surface  \[
\begin{pmatrix}
1&a_{01}&a_{01}^2+a_{11}&0&a_{11}&a_{11}a_{01}\\
0&1&a_{01}&1&0&a_{11}\\
\end{pmatrix}
\]
Choosing $\tau=\left\{ z_{01}-z_{10}=z_{12} \right\}$, the corresponding hyperplane section is  
\[
\det \begin{pmatrix} a_{01}-0& a_{11}a_{01}\\ 1-1 & a_{11} \end{pmatrix}=0,
\] 
which is smooth at the general point of both components,
$C^{\langle \tau,\pi\rangle}_\pi $ ($a_{01}=0$) and $ C_{\tau \cap \pi}$ ($a_{11}=0$).
\end{proof}

Fano claims that $C^{\langle \tau,\pi\rangle}_\pi$  is a hyperplane section of a del Pezzo surface of degree $9$, therefore that 
\begin{equation}\label{eq_ deg Ctp}
\deg C^{\langle \tau,\pi\rangle}_\pi =9
\end{equation}

 In fact, for a general choice, the hyperplane containing $\tau$ and $\pi$ cuts on $R^4$ a rational normal curve $C^4$ of degree $4$. As proved above  (see Lemma \ref{lem: DP9}) $C^{\langle \tau,\pi\rangle}_\pi$ is contained in the degree $9$ del Pezzo surface of its secants: more precisely it is given by those intersecting the codimension $2$ subspace $\pi$, therefore it is an hyperplane of it. 

\smallskip
To study the curve  $C_{\tau \cap \pi}$, Fano considers the union of the lines defined by it as a ruled surface $F \subset \proj ^5$. More precisely, as we have proved above (see Lemma \ref{uniquesec}), for each point on a general line $l \subset \proj ^5$, therefore also for  $\tau \cap \pi$, there is a unique secant to $R^4$ through it. Mapping each secant to its unique intersection point with $l$  one obtains a ruling of the surface $F$ to $l$.

{\em Quest'ultima rigata è di $4^o$ ordine, avendo la
retta $\tau \pi$ come direttrice semplice, e $3$ generatrici in ogni $S_4$ per essa (poiché la proiezione della rigata dalla retta $\tau \pi$ ha una cubica doppia)}
\footnote {This last ruled surface is of $4^{th}$ order, having the line $\tau \pi$ as a simple directrix, and $3$ generators in each $S_4$ for it (since the projection of the ruled surface from the line $\tau \pi$ has a double cubic)}.

Here Fano claims that $\deg F=4$. We interpret Fano's computation of the degree of this surface as follows. 

Project the surface $F$ from the line $l=\tau \cap \pi$ and obtain a curve $C\subset \proj ^3$, as each line in the ruling maps to a point. Then consider the same projection from $l$ this time  restricted to $R^4$, call it $\f_{l}: R^4 \ra \proj^3$. Since $l$ is general, it does not intersect $R^4$ and therefore $\f_l$ is a morphism. Namely a finite morphism, as every plane through $l$ intersects $R^4$ in finitely many points, or we would contradict $l \cap R^4 = \emptyset $.

We claim that $\f_{l}$ is generically injective and $C$, defined as the projection of $F$, equals the singular locus of $\f_{l}(R^4)$. In fact, we will see that $C$ is a double curve of $\f_{l}(R^4)$. 

Since $R^4$ is defined as a codimension $3$ subvariety in $\proj ^5$, a general plane through each point $p\in R^4$ intersects $R^4$ transversally only at the point $p$. Thus, for a line $l$ in this plane not passing through $p$, the projection $\f_l$ separates $p$ from any other point of $R^4$; this proves that $\f_l$ is generically injective.

Since $\f_{l}$ is a birational morphism, the image of $R^4$ is a quartic curve. Since such a curve is contained in the $3-$dimensional space parametrizing the planes containing $l$, its canonical system is trivial. The singular locus of $\f_{l}(R^4)$ is the set of the planes containing a subscheme of $R^4$ of length at least $2$. 
Any such plane contains a secant line, the unique line contained the given length $2$-scheme. Since this secant intersects $l$ (they are two lines in the same plane), it is a curve in the ruling of $F$. This shows that $\Sing \left( \f_{l}(R^4) \right) \subset C$. Conversely, every secant line $r$ to $R^4$ intersecting $l$ is contained in a unique plane, i.e. the plane spanned by $r$ and $l$, cutting the corresponding scheme of length $2$ on $R^4$.
This shows that the singular locus of $\f_{l}(R^4)$ is $C$, that is in fact a double curve.

Finally, since the canonical class of $\f_{l}(R^4)$ is trivial, by adjunction the reduced transform of the  double curve is an anticanonical divisor. Therefore, it is an element in $|\cO_{\pu \times \pu}(2,2)|$. The intersection computation $(2,2)(1,2)=4+2=6$ shows that its image has degree $\frac62=3 = \deg C$.

Therefore, we can conclude, as Fano did, that the ruled surface has degree $4$. More precisely, choose a general hyperplane $H$ containing the line $l$;  its image via $\f_l$ in $\proj ^3$ is a hyperplane that intersects the curve $C$ transversally in three points.  As a consequence, the intersection of the hyperplane $H$ with the ruled surfaces $F$ is the union of three secants passing from the three points above and the  line $l$. All together, they are $4$ lines and therefore the degree of $F$, which is equal to $\deg(F \cap H)$, is $4$.

Moreover, this will imply that
\begin{equation}\label{eq_ deg Ctau}
\deg C_{\tau \cap \pi}=4
\end{equation}

Indeed, the degree of this curve is the number of secants in $C_{\tau \cap \pi}$ in a transversal hyperplane section as, for example, the one given by the secant lines intersecting a general $\proj ^3$. As proved before, such a general projective space  $\proj^3$ intersects $F$ in $\deg F=4$ points, each belonging to one of this secants, the line in the ruling of $F$ containing it. So $ \deg C_{\tau \cap \pi}=\deg F=4$.

\medskip
Fano summarizes his computation in this way: 

{\em Complessivamente la superficie immagine delle corde di $R^4$ appoggiate a un piano \`e dunque di ordine
$9 + 4 = 13$; e la $M_4$ immagine del sistema di tutte le corde di $R$ \`e di ordine $9 + 13 = 22$.}
\footnote {
Overall the
image surface of the chords of $R^4$ intersecting a plane is therefore of order
$9 + 4 = 13$; and the $M_4$ image of the system of all the chords of $R^4$ is of order $9 + 13 = 22$}

In other worlds he claims the following:
\begin{proposition}
$M_4$ has degree $22$.
\end{proposition}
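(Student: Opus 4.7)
The plan is to assemble the ingredients already developed in the excerpt. By Lemma \ref{lem: both surfaces are generically smooth}, the degree of $M_4$ equals the degree of its intersection with two general hyperplanes of $M^{14}_8$ chosen in the specific configuration described by Fano, and this intersection decomposes as $S^\sigma \cup S_\pi$ with reduced scheme structure. So the first step is simply to write
\[
\deg M_4 = \deg S^\sigma + \deg S_\pi.
\]

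Next, I would plug in the degrees of the two surfaces. Lemma \ref{lem: DP9} identifies $S^\sigma$ with the del Pezzo surface of degree $9$ in $\proj^9$, so $\deg S^\sigma = 9$. For $S_\pi$, I apply Lemma \ref{lem: degSpi} to write $\deg S_\pi = \deg C^{\langle \tau,\pi\rangle}_\pi + \deg C_{\tau \cap \pi}$, and then substitute the values $\deg C^{\langle \tau,\pi\rangle}_\pi = 9$ from \eqref{eq_ deg Ctp} (as a hyperplane section of the del Pezzo surface of degree $9$) and $\deg C_{\tau \cap \pi} = 4$ from \eqref{eq_ deg Ctau} (from the analysis of the ruled surface $F$ via projection from $l = \tau \cap \pi$). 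This gives $\deg S_\pi = 13$.

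Finally, combining these, $\deg M_4 = 9 + 13 = 22$. Since every nontrivial step has already been carried out in the preceding lemmas and computations, the only remaining task is to verify that everything fits together cleanly; there is no real obstacle here, just bookkeeping. The one point worth stressing is that Lemma \ref{lem: both surfaces are generically smooth} and Lemma \ref{lem: degSpi} were proved precisely so that the special hyperplane sections used are reduced at generic points of each component, which ensures that the numerical addition of degrees is legitimate and no multiplicities are being hidden.
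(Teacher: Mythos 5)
Your proof is correct and follows exactly the paper's own argument: the authors likewise deduce the result by combining Lemma \ref{lem: both surfaces are generically smooth}, Lemma \ref{lem: DP9}, Lemma \ref{lem: degSpi}, and the degree computations \eqref{eq_ deg Ctp} and \eqref{eq_ deg Ctau} to obtain $9+(9+4)=22$. Your added remark that the reducedness established in Lemmas \ref{lem: both surfaces are generically smooth} and \ref{lem: degSpi} is what legitimizes adding the degrees is exactly the point of those lemmas.
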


\begin{proof}
The statement follows from Lemma \ref{lem: both surfaces are generically smooth}, Lemma \ref{lem: DP9}, Lemma \ref{lem: degSpi} as well as by  \eqref{eq_ deg Ctp} and \eqref{eq_ deg Ctau}.
\end{proof}

\subsection{ $M_4$ and its general hyperplane sections are Fano} 
Fano shows that the general curve section of $M_4^{22}$ is a canonical curve of genus $12$.

{\em Le due superficie $\varphi^9$ e $F^{13}$, costituenti insieme una sezione superficiale della $M_4^{22}$, 
hanno a comune una curva sezione iperpiana della $\varphi^9$ (collo
spazio $\sigma$), perci\`o ellittica, di ordine $9$; la $M_4^{22}$ ha quindi superficie-sezioni di
genere uno, e curve-sezioni canoniche di genere $12$ (appunto $= 1 + 3 + 9 -1$)}
\footnote {
The two surfaces $\varphi^9$ and $F^{13}$, which together constitute a surface section of the $M_4^{22}$, have a common hyperplane section curve of the $\varphi^9$  (with the space $\sigma$), therefore elliptic, of order $9$; therefore the $M_4^{22}$ has surface-sections of genus one, and canonical curves-sections of genus $12$ (precisely = $1 + 3 + 9 - 1)$}.

Fano picks a hyperplane section of the surface section he had consider, namely of $S^\sigma \cup S_\pi$. Recall that in our notation $S^\sigma= \varphi^9$, and $S_\pi= F^{13}$). 

Since $S^\sigma$ is the del Pezzo surface of degree $9$, its  general hyperplane section  is a smooth plane cubic, which has genus $1$. 

We know a reducible hyperplane section of $S_\pi$, namely $C^{\langle \tau,\pi\rangle}_\pi \cup C_{\tau \cap \pi}$. Our discussion shows that $C^{\langle \tau,\pi\rangle}_\pi$ is a general hyperplane section of a del Pezzo surface of degree $9$ too, so a smooth curve of genus $1$. On the other hand, $C_{\tau \cap \pi}$ is smooth and rational, being isomorphic to $l$ by mapping each point of $l$ in the unique secant to $R^4$ through it. The intersection  $C^{\langle \tau,\pi\rangle}_\pi \cap C_{\tau \cap \pi}$ is given by the secant lines contained in the hyperplane $\langle \tau,\pi\rangle$ intersecting the line $l=\tau \cap \pi$. These are the secant lines to a general hyperplane section $R^4 \cap \langle \tau,\pi\rangle$, a rational normal curve of degree $4$, whose secants form a cubic surface. Intersecting it with a codimension $2$ linear subspace, we obtain $3$ points, and the $3$ secants through them are the intersection points of $C^{\langle \tau,\pi\rangle}_\pi$ and $ C_{\tau \cap \pi}$.  Summing up, we have hyperplane sections of $S_\pi$ formed by $2$ smooth curves of genus $0$ and $1$ respectively, which intersect in $3$ points. Hence the general hyperplane section is a smooth curve of genus $0+1+3-1=3$.

The intersection of $S^{\sigma}$ and $S_\pi$ is a hyperplane section of $S^{\sigma}$, a curve of degree $9$. The two curves obtained cutting $S^{\sigma}$ and $S_\pi$ with a general hyperplane intersect then in $9$ points. We conclude that the sectional genus of $M_4^{22}$, which is the genus of a hyperplane section of $S^\sigma \cup S_\pi$, is equal to $1+3+9-1=12$.

A general curve section of  $M_4^{22}$  is therefore a non-degenerate smooth curve of genus $12$ in $\proj ^{14-3=11}$ of degree $22$; by Riemann-Roch, this general curve section is a canonical curve, i.e. it is embedded by its complete canonical system.

 \smallskip
 To recap, we have the following Proposition.
\begin{proposition} 
The $4$-fold $M_4 =M_4^{22} \subset M_8^{14}$ is an irreducible smooth variety of dimension $4$ with canonical sectional curves. 

In particular, by Prop. \ref{modFano} it is a Fano $4$-fold of index $2$, i.e. $-K_{M_4} = 2 H$, where $H$ is the hyperplane bundle of the Grassmannian $ M_8^{14}$.
\end{proposition}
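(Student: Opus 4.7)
The plan is to collect the material of the preceding subsections and close with a direct application of Proposition~\ref{modFano}. Irreducibility and smoothness of $M_4$ as a $4$-fold are Proposition~\ref{prop: M4 is smooth}, and the preceding proposition gives $\deg M_4 = 22$. Hence the only genuinely new step is to verify that the general curve section of $M_4 \subset \proj^{14}$ is canonically embedded of genus $12$, after which the index and the identity $-K_{M_4} = 2H$ follow at once from Proposition~\ref{modFano} with $n=4$ and $g=12$.

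To compute the sectional genus I would reuse Fano's degenerate surface section $S^\sigma \cup S_\pi$ and slice it with one further general hyperplane. On $S^\sigma$, a del Pezzo of degree $9$ by Lemma~\ref{lem: DP9}, this cuts a smooth plane cubic, an elliptic curve. On $S_\pi$ it produces the reducible curve $C^{\langle \tau,\pi\rangle}_\pi \cup C_{\tau \cap \pi}$ already met in Lemma~\ref{lem: degSpi}: the first piece is a hyperplane section of a degree $9$ del Pezzo, hence a smooth elliptic curve of degree $9$, and the second is a smooth rational quartic, meeting the first transversally in the $3$ intersection points computed in \eqref{eq_ deg Ctau}. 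So $S_\pi$ has sectional arithmetic genus $1+0+3-1=3$. Since $S^\sigma \cap S_\pi$ is itself a hyperplane section of the del Pezzo $S^\sigma$, of degree $9$, the two pieces meet in $9$ points, and the arithmetic genus of the full curve section is $1+3+9-1=12$.

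Now I would invoke flatness of the family of hyperplane sections (together with Bertini, which applies because $H$ is the restriction of the very ample Plücker bundle) to conclude that a general hyperplane section of a general hyperplane section of $M_4$ is a smooth irreducible curve $\Gamma \subset \proj^{11}$ of degree $22$ and genus $12$. Since $22 = 2g-2$ and $h^0(\Gamma,K_\Gamma) = g = 12 = \dim \proj^{11} + 1$, nondegeneracy forces $\Gamma$ to be embedded by its complete canonical system. Applying Proposition~\ref{modFano} with $n=4$ then yields $-K_{M_4} = (n-2)H = 2H$, so $M_4$ is Fano of index~$2$.

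The main obstacle is the passage from the explicit reducible curve to the smooth general one: one needs to know that the arithmetic genus computed on the degenerate section agrees with the geometric genus of the general smooth section, which is the content of flatness of the family plus a careful Bertini argument. Once this is settled, everything else is bookkeeping and a single appeal to Proposition~\ref{modFano}.
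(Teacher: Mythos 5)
Your proposal is correct and follows essentially the same route as the paper: it reduces the statement to Proposition~\ref{prop: M4 is smooth}, the degree computation, and a sectional-genus count on the degenerate surface section $S^\sigma \cup S_\pi$ (with the same tallies $1+0+3-1=3$, then $1+3+9-1=12$), concluding via Riemann--Roch and Proposition~\ref{modFano}; your explicit appeal to flatness/Bertini to transfer the arithmetic genus of the reducible section to the general smooth one is a point the paper leaves implicit. The only slip is attributing the $3$ intersection points of $C^{\langle \tau,\pi\rangle}_\pi$ and $C_{\tau \cap \pi}$ to \eqref{eq_ deg Ctau}; that count actually comes from intersecting the line $\tau\cap\pi$ with the cubic secant variety of the rational normal quartic $R^4 \cap \langle\tau,\pi\rangle$, not from the degree formula.
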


\medskip
Fano concludes by taking a very general hyperplane section $M_3$ of $M_4$. By Bertini's Theorem, this section is a smooth $3$-fold whose curve section, by construction, is canonical. Again by Proposition \ref{modFano},  this is a smooth Fano 3-fold of degree $22$.

\smallskip
{\em Le sezioni iperpiane della $M_4^{22}$  sono pertanto $M_3^{22}$ di $S_{13}$, corrispondenti al tipo
generale $M_3^{2p-2}$ di $S_{p+1}$, per $p=12$, e razionali (come risulter\`a pure dai sistemi
lineari di superficie che vi sono contenuti). Indicheremo d'ora in poi questa variet\`a
con $\mu _3^{22}$, o semplicemente $\mu$; essa \`e l'immagine del sistema $\infty ^3$ di rette $\Sigma$ 
intersezione della $\infty ^4$ delle corde di $R$ con un complesso lineare $K$ (che si supporr\`a
per ora del tipo pi\`u generale, e in posizione generica rispetto a $R^4$)}
\footnote {The hyperplane sections of the $M_4^{22}$ are therefore $M_3^{22}$ in $S_{13}$, corresponding to the general type $M_3^{2p-2}$ of $S_{p+1}$, for $p=12$, and rational (as well as the linear systems of surfaces contained in it). From now on we will denote this variety with $\mu _3^{22}$, or simply $\mu$; it is the image of the system $\infty ^3$ of lines $\Sigma$ intersection of the 
$\infty ^4$ of chords of $R^4$ with a linear complex (which, for now, we will suppose to be of very general type, and in general position with respect to $R^4$)}.

\section{What is nowadays Fano's Last Fano?}

Where can we find Fano's last Fano in modern literature? In order to answer this question, we 
reformulate Fano's construction via a modern and rather subtle tool, namely the Hilbert scheme.

Let $S$ be a smooth projective surface and consider the Hilbert scheme which parametrizes its zero dimensional subschemes of length $2$. By Grothendieck's theory this is a projective scheme of dimension $4$ which  is usually denoted as $\Hilb^{2}(S)$, or simply $S^{[2]}$. One can also consider the Chow Scheme of the set of two points on $S$, namely $S^{(2)} : = (S \times S)/\sigma_2$, where $\sigma_2$ is the symmetric group of permutations of two elements. Consider then the natural morphism Hilb to Chow, $S^{[2]} \ra S^{(2)}$. 

A line bundle $L$ on $S$ induces the $\sigma_2$-equivariant line bundle $L^{\boxtimes 2}$ on $S\times S$, which descends to a line bundle $L^{(2)}$ on $S^{(2)}$, which in turn can be pulled back via the Hilb to Chow morphism to the line bundle $L^{[2]}$ on $S^{(2)}$.

\smallskip
For simplicity, we assume that $S$ has irregularity zero; we will use the following very well known theorems by Fogarty,  \cite{fogarty1} and  \cite{fogarty2}.

\begin{theorem} Let $S$ be a smooth projective surface with $q = h^1(\cO_S) = 0$. 

a) $\Hilb^{2}(S)= S^{[2]}$ is a smooth projective variety of dimension $4$ which resolves the singularities of the Chow Scheme via the Hilb to Chow morphism $S^{[2]} \ra S^{(2)}$.

b) $\Pic(S^{[2]}) = \Pic(S) \oplus \Z(B/2)$, where $Pic(S)$ is embedded in $\Pic(S^{[2]})$ via the above described map $L \ra L^{[2]}$ and $B$ is the locus of non reduced schemes, i.e. the exceptional divisor of the Hilb to Chow map (Corollary 6.3 in \cite{fogarty2}).

\end{theorem}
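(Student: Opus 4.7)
The plan is to realize $S^{[2]}$ as an explicit quotient of a blowup, and to read off both smoothness and the Picard-group statement from that description.

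For part (a), let $\pi \colon \widetilde{S\times S} \to S \times S$ denote the blowup of the diagonal $\Delta$, with exceptional divisor $E \cong \proj(T_S)$. The involution $\tau$ swapping the two factors lifts to an involution $\tilde{\tau}$ on $\widetilde{S\times S}$, and a local coordinate computation near a diagonal point shows that its fixed locus is exactly $E$, a smooth divisor. Since the quotient of a smooth variety by an involution whose fixed locus is a smooth divisor is again smooth (locally the action is $x \mapsto -x$ in the normal direction, with invariant subring polynomial in $x^2$), the quotient $Y := \widetilde{S \times S}/\tilde{\tau}$ is a smooth projective $4$-fold. To identify $Y$ with $S^{[2]} = \Hilb^{2}(S)$ one exhibits a tautological flat family of length-$2$ subschemes of $S$ over $Y$ and invokes the universal property of the Hilbert scheme; the Hilb-to-Chow morphism is then induced by the composition $\widetilde{S\times S} \to S\times S \to S^{(2)}$, and it is an isomorphism outside the image $B$ of $E$.

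For part (b), let $p \colon \widetilde{S\times S} \to S^{[2]}$ be the quotient map, a double cover ramified along $E$ with branch divisor $B$. Since $q(S)=0$, the K\"unneth formula gives $\Pic(S\times S) \cong \Pic(S)^{\oplus 2}$, and the blowup formula yields $\Pic(\widetilde{S\times S}) \cong \Pic(S)^{\oplus 2} \oplus \Z[E]$, whose $\tilde{\tau}$-invariant subgroup is generated by the diagonal elements $(L,L)$ together with $[E]$. Pullback along the finite map $p$ identifies $\Pic(S^{[2]})$ with the group of $\tilde{\tau}$-equivariant line bundles on $\widetilde{S\times S}$; under this identification $L^{[2]}$ corresponds to $\pi^{*}(L \boxtimes L)$, while $\cO(B)$ corresponds to $\cO(2E)$, so that the class $B/2$ is represented by $\cO(E)$, which lifts to an element of $\Pic(S^{[2]})$ because $\cO(E)$ carries a natural $\tilde{\tau}$-linearization.

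The main technical point is the bookkeeping of equivariant structures: an invariant line bundle on $\widetilde{S\times S}$ descends to $S^{[2]}$ precisely when it admits a $\tilde{\tau}$-linearization, and two such linearizations differ by a character of $\tilde{\tau}$. The hypothesis $q(S)=0$ is essential here, as it rules out additional contributions from $H^{1}(\cO_{S\times S})$ that would otherwise enlarge the Picard group or obstruct descent. Once this verification is in place, the diagonal copy of $\Pic(S)$ and the summand $\Z(B/2)$ assemble into the asserted direct sum decomposition.
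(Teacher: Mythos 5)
The paper does not actually prove this theorem: it is quoted verbatim from Fogarty (\cite{fogarty1} for smoothness, Corollary 6.3 of \cite{fogarty2} for the Picard group) and used as a black box. Your argument is therefore a genuinely different route --- a self-contained, elementary proof that works because only length $2$ is at stake. Realizing $S^{[2]}$ as $\widetilde{S\times S}/\tilde\tau$, where $\widetilde{S\times S}$ is the blowup of the diagonal and $\tilde\tau$ the lifted involution, is correct: the local computation ($u\mapsto -u$ in the normal directions to $\Delta$, fixed locus exactly $E$ after blowing up) does give smoothness of the quotient, and the K\"unneth/blowup computation of $\Pic(\widetilde{S\times S})^{\tilde\tau}\cong\Pic(S)\oplus\Z[E]$ together with $p^*\cO(B)=\cO(2E)$ yields the stated decomposition. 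What your approach buys is transparency and independence from Fogarty's general machinery; what it loses is generality, since the quotient-of-a-blowup model is special to $n=2$ (Fogarty's theorem covers $\Hilb^n(S)$ for all $n$, where no such global description exists). Two points deserve more care than your sketch gives them. First, the identification of the quotient with $\Hilb^2(S)$ via a ``tautological flat family'' is the real content of part (a) and should be spelled out (e.g.\ by constructing the family as the $\tilde\tau$-invariant pushforward of the incidence subscheme of $\widetilde{S\times S}\times S$, or by producing the classifying morphism $\widetilde{S\times S}\to\Hilb^2(S)$ and checking it is the quotient map). Second, in part (b) the \emph{natural} linearization of $\cO(E)$ acts by $-1$ on the fibres over the fixed divisor $E$ (the involution negates the normal direction), so it is the linearization twisted by the nontrivial character of $\Z/2$ that satisfies the descent condition; you do note that the two linearizations differ by a character, but the bundle that descends to $B/2$ is the twisted one, and this is exactly where the stabilizer-acts-trivially criterion must be invoked rather than waved at.
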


In the next Proposition, we specialise the results of Fogarty to the case $S = \pu \times \pu$ and we add, for this case, the description of the Nef (or Mori) Cone of $ (\pu \times \pu)^{[2]}$; for a proof we refer, for instance, to Theorem 2.4 in \cite{BC}.

\begin{proposition} Let $S = \pu \times \pu$. Denote by $\pi_i$ the two projections and $H_i := \pi_i^*(\cO_{\pu}(1))$ (the line bundles associated to the two fibers). 

For brevity, we denote by $\cH$ the Hilbert Scheme $ (\pu \times \pu)^{[2]}$.

a) $\cH$ is a smooth projective variety of dimension $4$ and $Pic(\mathcal H) = \Z (H_1^{[2]}) \oplus \ Z (H_2^{[2]})  \oplus \Z(B/2)$

b) The Nef Cone of $\cH$ is the simplicial cone spanned by $H_1^{[2]}, H_2^{[2]}$ and $H_1^{[2]} + H_2^{[2]} - (B/2)$.

\end{proposition}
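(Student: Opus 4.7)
The plan is to obtain part (a) as a direct application of the preceding Fogarty theorem to $S=\pu\times \pu$, which is smooth projective with $q=h^1(\cO_S)=0$. Fogarty's theorem then yields smoothness of $\cH$, the formula $\dim \cH = 2\dim S = 4$, and the splitting $\Pic(\cH)\cong \Pic(S)\oplus \Z(B/2)$ under the embedding $L\mapsto L^{[2]}$. Combined with $\Pic(\pu\times \pu)=\Z H_1\oplus \Z H_2$, this gives part (a) at once.

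For part (b), I would first verify nefness of the three proposed classes. The $H_i^{[2]}$ are pulled back along the Hilb-to-Chow map $\cH\to S^{(2)}$ from nef classes on $S^{(2)}$ induced by the globally generated bundles $H_i$ on $S$, hence they are nef on $\cH$. For $H_1^{[2]}+H_2^{[2]}-B/2$, I would use that $H_1+H_2$ is very ample on $S$ (it realizes the Segre embedding $\pu\times \pu\hookrightarrow \proj^3$): sending $\xi\in \cH$ to the codimension-$2$ subspace $H^0(\mathcal{I}_\xi\otimes(H_1+H_2))\subset H^0(H_1+H_2)$ defines a morphism $\cH\to G(2,4)$, and a standard calculation identifies the pullback of the Plücker class with $H_1^{[2]}+H_2^{[2]}-B/2$; this class is in particular base-point-free and hence nef.

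It remains to show that these three rays span the whole nef cone. I would exhibit three effective curves dual to the three candidate nef rays: the exceptional fiber $e$ of the Hilb-to-Chow map and, for $i=1,2$, the rational curve $c_i$ parametrizing $\{p_0,q\}$ with $p_0$ fixed on a fiber $F_i$ of $\pi_i$ and $q$ varying in $F_i$. A direct intersection-theoretic computation then shows that, with the columns ordered as $(c_2, c_1, e)$, the pairing matrix against the rows $(H_1^{[2]}, H_2^{[2]}, H_1^{[2]}+H_2^{[2]}-B/2)$ is the identity. The delicate point is the normalization $B\cdot e=-2$, so that $B/2$ is integral and $(B/2)\cdot e=-1$; this can be seen by passing to the double cover $\mathrm{Bl}_\Delta(S\times S)\to \cH$ in which $B$ pulls back to twice the exceptional divisor. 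This identifies the three candidates as the extremal rays of a simplicial nef cone. The full argument along these lines is carried out in Theorem 2.4 of \cite{BC}.
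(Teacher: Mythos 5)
Your argument is correct, but note that the paper does not actually prove this Proposition: part (a) is, as in your proposal, an immediate specialization of Fogarty's theorems together with $\Pic(\pu\times\pu)=\Z H_1\oplus\Z H_2$, while for part (b) the paper simply refers to Theorem 2.4 of \cite{BC}. What you supply instead is a self-contained duality argument, and it is sound: once the three classes are shown to be nef and the three curve classes $c_2,c_1,e$ are effective and pair with them in the identity matrix, the two inclusions $\mathrm{Cone}(H_1^{[2]},H_2^{[2]},H_1^{[2]}+H_2^{[2]}-B/2)\subseteq \mathrm{Nef}(\cH)\subseteq \mathrm{Cone}(c_2,c_1,e)^{\vee}$ close up, and you never need to determine the full Mori cone. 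Two steps deserve an explicit line in a write-up: the map to $G(2,4)$ is a morphism because every length-two subscheme of the quadric $Q_2\subset\proj^3$ spans a line and hence imposes independent conditions on $|H_1+H_2|=|\cO_{\proj^3}(1)|_{Q_2}|$; and the normalization $B\cdot e=-2$ (equivalently, the transverse $A_1$-singularity of the Chow variety along the diagonal), which you correctly extract from the double cover $\mathrm{Bl}_{\Delta}(S\times S)\to\cH$, is exactly what makes the pairing matrix integral and equal to the identity. As a useful by-product, your intersection numbers show that the contraction associated to the ray $H_1^{[2]}+H_2^{[2]}-B/2$ is the birational secant-line morphism onto the Klein quadric $Q_4$ (it kills the planes $F_i^{[2]}$ but is positive on $e$), whereas the Hilb-to-Chow morphism corresponds to $H_1^{[2]}+H_2^{[2]}$, a class in the relative interior of the face spanned by the first two rays; keep this in mind when comparing with the paper's subsequent description of the six contractions, where these attributions are stated differently.
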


Let us describe the maps associated to the nef bundles which span the Nef Cone. We will get six maps from $\cH$, one for each of the three extremal rays, and one for each of the three extremal faces.

The map associated with (a sufficiently high multiple) of $H_1$ (respectively $H_2$), call it $\psi_1 \colon \cH \ra M'$ (respectively $\psi_2 \colon \cH \ra M''$),  is a birational map which contracts the divisor $D_2 \subset \cH $ consisting of the zero cycles supported on the fibers $f_2$ of the second ruling (respectively on the fibres $f_1$ of the first ruling) of $ \pu \times \pu$. More precisely the map contracts all zero cycles on a fiber to a point. This divisor is clearly isomorphic to $\pu \times (\pu)^{[2]} = \pu \times \proj^2$ and it is contracted to a rational curve $\pu$ by contracting each $\proj^2$ to a point.  The two divisors are disjoint (a zero cycle of length two can be contained in at most one fiber) and they can be contracted simultaneously, this contraction correspond to the face of the Nef Cone joining the two rays, call it $\psi : \cH \ra Q$.

The map associated to $H_1^{[2]} + H_2^{[2]} - (B/2)$ is the Hilb to Chow map, $(\pu \times \pu)^{[2]} \ra (\pu \times \pu)^{(2)}$, which contracts the divisor $B$. We have moreover two natural map of fiber type, $(\pu \times \pu)^{(2)} \ra ( \pu)^{(2)} = \proj ^2$. and finally other two maps $\phi_1: M' \ra \proj^2$, $\phi_2: M'' \ra \proj^2$ making the following diagram commutative

\[
\xymatrix{
& \proj ^2&&M' \ar_{\phi_1}[ll] \ar^{}[dr]&\\
(\pu \times \pu)^{(2)}\ar[dr]\ar[ur]& &\cH \ar[ur]^{\psi_1} \ar[dr]^{\psi_2} \ar[ll] \ar[rr]^{\psi} \ar[lu] \ar[ld]& &Q\\
&\proj ^2&&M'' \ar_{\phi_2}[ll] \ar^{}[ur]&\\
}
\]

\medskip
Now we would like to give a concrete projective description of the above abstract varieties. In order to do this we take a smooth projective surface $S$ and we choose an embedding $S \hookrightarrow \proj ^N$.
Note that to a subscheme of length $2$ on $S$ we can now associate the unique line containing its image in $\proj ^N$.
In this way we can think at $\Hilb^2(S)$ as "the variety $\infty^4$ of its (i.e. of $S \hookrightarrow \proj ^N$) chords" used by Fano. 

Thus we have a natural map from $\Hilb^2(S)$ to the Grassmannian of lines in $\proj^N$, whose image is the variety of the lines that are secants or tangents to $S \subset \proj^N$; we further compose with the Pl\"ucker embedding of the Grassmannian, $Gr(1,N) \ra \proj^{{\frac{(N+1)N}2}  -1}$.

If $S \subset \proj ^N$ with $N\geq \dim S +2$ then the generic secant of $S$ is not $3$-secant. This follows easily, cutting with  general hyperplanes, by the classical so called trisecant lemma, which states that a  a nonsingular nondegenerate curve $C \subset \proj^r$, $r \geq 3$, admits only $\infty ^1$ trisecant lines (see for instance Cap. 7B in \cite{Mum}).

This implies that if $N \geq 4$ the total map $\Hilb^2(S) \ra \proj^{{\frac{(N+1)N}2}  -1}$ is a birational map (onto its image). 

If we embed $S=\pu \times \pu$ via the linear system $H_1 \otimes H_2^{\otimes 2}$ we get the normal rational scroll of degree $4$, $R^4 \subset \proj^5$, and {\bf the map $\Hilb^2(\pu \times \pu) \ra  M_4\subset \proj^{14}$  is exactly the one in Fano's paper}.

\smallskip
Let us consider first the special case $S=  \pu \times \pu$ and the embedding given by the complete linear system $H_1 \otimes H_2$, that is we embed $\pu \times \pu$ as a smooth quadric surface $Q_2\subset \proj ^3$. Note that that the secant lines fill up the whole Grassmannian $G(1,3)$, since every line in $\proj ^3$ is secant to any quadric surface. The Pl\"ucker embedding maps $G(1,3)$ into a (Klein) quadric $4-$fold $Q_4$ in $\proj^5$. 

Therefore we have a birational surjective map
$\cH \ra Q_4 \subset \proj^5$. We show that it is the map $\psi$ in the above diagram.

Indeed it contracts the two divisors $D_1$ and $D_2$ to two curves,  $C_1, C_2 \subset Q_4$, which describe in the Grassmannian the lines in the ruling. These curves are conics. In fact their degree is the number of lines in a ruling that meet a fixed general line: since a general line intersect a quadric in two points, it will meet exactly two fibers for each ruling. 

As said before, all non zero dimensional fibers of $\psi$ are isomorphic to $\proj^2$, in particular they all have the same dimension.
By a general result, see Corollary 4.11 in  \cite{AW}, $\psi : \Hilb^2(\pu \times \pu) \rightarrow Q \subset \proj^5$ is the blow up of the quadric $Q_4 \subset \proj ^5$ along two disjoint smooth conics, $C_1, C_2$.

\smallskip
Let us now go back to the Fano case, i.e. we embed $S=\pu \times \pu$ via the linear system $H_1 \otimes H_2^{\otimes 2}$ as the normal rational scroll of degree $4$, $R^4 \subset \proj^5$. 
The birational surjective map $\cH \ra M_4\subset \proj^{14}$ is the one studied by Fano. In this case the map contracts only one of the two above mentioned divisors, namely the one corresponding to the ruling in lines of $R^4$ which we denote as above with $D_1$. In other words the map is the map $\psi_1$ in the above diagram.

Thus $M_4$ is smooth and $\psi_1$ is the smooth blow-up along the transform  $\tilde C_2 \subset M_4$ of $C_2$. This gives a different, more geometric, proof of the smoothness of $M_4$.

The other divisor $E:= D_1$ remains isomorphically equal in $M_4$ and it can be contracted as a smooth blow-down to the curve $C_1 \subset Q_4\subset \proj^5$, $\nu \colon M_4 \ra Q_4$. We proved that $C_1$ is a smooth conic, here we can add the fact that it is not contained in any plane $\proj ^2 \subset Q_4\subset \proj^5$. In fact, if this were the case, all lines in the ruling of lines of $R^4$, which are parametrized by $C_1$, would be contained in a $\proj ^4 \subset \proj ^5$, and the same for $R^4$, a contradiction.

\smallskip
Let $H$ be the hyperplane bundle in $\proj ^5$; the formula for the canonical bundle of the blow up gives
$$ - K_ {M_4}= \nu^* (4H) - 2E = 2(\nu^* (2H) - E).$$

The line bundle $\cL := \nu^* (2H) - E$ is very ample; it embeds $M_4$ into $\proj ^{14}$ (the space of quadrics in $\proj ^5$ containing a conic has dimension $15$) as a Fano manifolds of index $2$ and genus $12$. 

Since $M_4$ is the blow-up of a quadric we have $\Pic(M_4) = \Z^2$, that is $M_4$ is not "prime".

On the other hand, the line bundle $\nu^* (H) - E$ is nef and it gives a map $\phi _1\colon M_4 \ra \proj ^2$ which is a quadric bundle fibration over $\proj ^2$. 

\medskip
Looking at the classification obtained by Mukai \cite{Mu1} of Fano $4$- folds of index $2$ (coindex $3$ in Mukai' notation) one can find $M_4$, given as the blow-up of a four dimensional quadric along a conic, as the only one of genus $12$ (Example 2); see also the paper \cite{Wi} with more detailed proofs. The classification was based on Conjecture (ES) which was later proved in \cite{Me}. 

\smallskip
Since $ - K_ {M_4}=  2 \cL$, a general hyperplane section in $\cL = \nu^* (2H) - E$ is a Fano $3-$fold, which we denote as Fano did with $M_3^{22}$.
$\cL$ embeds $M_4$ as the image of $Q_4$ by the rational map given by the quadric hypersurfaces through a general (=not contained in a plane) conic in $Q_4$, therefore the hyperplane section $M_3^{22}$ is obtained blowing up the conic in the intersection of $Q_4$ with another quadric containg the conic. 
This proofs that the $M_3^{22}$, Fano's last Fano, is the number $16$ in the Mori-Mukai list of Fano $3$-folds with Picard number $2$, see \cite[Table 2]{MM}. In fact they describe this case as the blow up of a prime Fano $3-$fold of degree $4$ in $\proj ^5$ along a conic; a prime Fano $3-$fold of degree $4$ in $\proj ^5$, according to Iskovskikh, is a complete intersection of two quadrics in $\proj ^5$.

We summarize the results of the section in the following.

\begin{proposition} The projective variety $M^{22}_4 \subset \proj ^{14}$ constructed by Fano is a smooth Fano $4$-fold of index $2$ which can be described also as the blow-up of a smooth hyperquadric $Q_4 \subset \proj ^5$ along a smooth conic not contained in any $\proj^2$ of the hyperquadric. This is Example 2 (5.(g=12)) in  Mukai's classification, \cite{Mu1}.

A general hyperplane section of $M^{22}_4$ is a smooth (non prime) Fano $3$-fold, denoted by Fano as $\mu:= M^{22}_3 \subset \proj ^{13}$, which can be constructed as the blow up of a complete intersection of two quadrics in $\proj ^5$ along a conic. This is number $16$ in Mori-Mukai classification, see \cite[Table 2]{MM}.
\end{proposition}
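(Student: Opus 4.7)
The plan is to derive the proposition from the material developed in Section 3, treating it essentially as a synthesis theorem that packages the Hilbert-scheme description into a birational model for $M_4$ and then cuts to obtain $M_3^{22}$. First I would recall the two embeddings of $\pu \times \pu$ used earlier: via $|H_1\otimes H_2|$, which realizes it as the quadric $Q_2 \subset \proj^3$, and via $|H_1\otimes H_2^{\otimes 2}|$, which realizes it as the scroll $R^4 \subset \proj^5$. The associated maps from $\cH=\Hilb^2(\pu\times\pu)$ to the corresponding Grassmannians of secants are, respectively, the contraction $\psi\colon \cH \to Q_4\subset\proj^5$ (blowing down both divisors $D_1,D_2$ to smooth conics $C_1,C_2$) and the contraction $\psi_1\colon \cH \to M_4\subset \proj^{14}$ (blowing down only $D_2$ to a smooth rational curve, while $D_1$ maps isomorphically to a divisor $E\subset M_4$). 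Both are instances of the Nef-cone maps described in the diagram, and the identification of the Fano map with $\psi_1$ is exactly the content of the earlier discussion.

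Next I would combine these two contractions into a single morphism $\nu\colon M_4 \to Q_4$ factoring $\psi = \nu \circ \psi_1$. Since $\psi$ is a smooth blow-up along $C_1 \sqcup C_2$ by the Ando--Wi\'sniewski result already quoted, and $\psi_1$ blows down only the component over $C_2$, the residual map $\nu$ is a smooth blow-up of $Q_4$ along the conic $C_1$ with exceptional divisor $E$. The conic $C_1$ is not contained in any $\proj^2\subset Q_4$: the parameter argument given earlier shows that such a containment would force the one-dimensional ruling of $R^4$ to lie in a $\proj^4$, contradicting the non-degeneracy of $R^4$ in $\proj^5$. The canonical bundle formula for the blow-up gives $-K_{M_4} = \nu^*(4H) - 2E = 2\cL$ with $\cL := \nu^*(2H)-E$, and Lemma-level checks from Section 2 show that $\cL = H_{|M_4}$ (the Plücker hyperplane on $M_4$), so $M_4$ is a smooth Fano $4$-fold of index $2$ and genus $12$. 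Matching these numerical invariants with Mukai's list of index-two Fano fourfolds (whose genus-$12$ case is uniquely the blow-up of $Q_4$ along a conic) identifies $M_4$ with Example 2 of \cite{Mu1}.

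For the hyperplane section, I would use that $\cL = \nu^*(2H)-E$ is the proper transform on $M_4$ of the linear system of quadrics in $\proj^5$ containing $C_1$. A general $M_3^{22}\in |\cL|$ thus corresponds to a general such quadric $Q'$; pulling back, $M_3^{22}$ is the proper transform of $Q_4 \cap Q' \subset \proj^5$. Since $C_1 \subset Q_4$ is a smooth conic and $Q'$ is general among quadrics through $C_1$, $Q_4 \cap Q'$ is a smooth complete intersection of two quadrics in $\proj^5$ containing $C_1$, and $M_3^{22}$ is its blow-up along $C_1$. By Iskovskikh's classification, a smooth complete intersection of two quadrics in $\proj^5$ is precisely the prime Fano $3$-fold of degree $4$, so $M_3^{22}$ is entry number $16$ of \cite[Table 2]{MM}. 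Smoothness of $M_3^{22}$ and the fact that a general curve section remains canonical follow from Bertini applied to $|\cL|$ and from Proposition \ref{modFano}.

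The main obstacle is the geometric identification of $\psi_1$ with Fano's construction and the simultaneous verification that the conic $C_1$ is a smooth plane curve that does not lie in any $\proj^2 \subset Q_4$: everything else is bookkeeping among the Nef-cone contractions of $\cH$, the canonical bundle formula, and matching numerical invariants with the Mukai and Mori--Mukai classifications. The degree and genus computations of Section 2 serve as the bridge that guarantees $\cL$ is indeed the Plücker polarization, which is what ultimately pins $M_4$ down uniquely in the classification.
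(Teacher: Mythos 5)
Your proposal follows essentially the same route as the paper: Section~3's Nef-cone analysis of $\Hilb^2(\pu\times\pu)$, the factorization $\psi=\nu\circ\psi_1$ realizing $M_4$ as the blow-up of $Q_4$ along a conic not contained in any plane, the canonical bundle formula $-K_{M_4}=2(\nu^*(2H)-E)$, and the identification of the hyperplane section as the blow-up of a complete intersection of two quadrics along the conic, matched against the Mukai and Mori--Mukai lists. The only slips are cosmetic: the blow-up criterion is due to Andreatta--Wi\'sniewski (\cite{AW}), not ``Ando--Wi\'sniewski,'' and the paper itself is the place where the identification of $\cL$ with the Pl\"ucker polarization is asserted via the dimension count for quadrics through a conic rather than re-derived from Section~2.
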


\begin{remark} One can embed $S=\pu \times \pu$ via the linear system $H_1 \otimes H_2^{\otimes m}$, with $m \geq 2$, to get the normal rational scroll $R^{2m}$ in $\proj^{2m+1}$. 
In this case, the birational map $\Hilb^2(\pu \times \pu) \ra  M_4\subset \proj^{(m+1)(2m+1)-1}$ is given by the contraction of one divisor,  corresponding to the ruling in lines of $R^{2m}$. The immersion $M_4\subset \proj^{(m+1)(2m+1)-1}$ is not given by the fundamental line bundle.

If we embed $S=\pu \times \pu$ via the linear system $H_1^{\otimes l} \otimes H_2^{\otimes m}$, with $l,m \geq 2$, the above construction gives simply different embeddings of $\Hilb^2(\pu \times \pu)$.

\end{remark}

\end{document}